\documentclass[11pt]{amsart}	
\usepackage[utf8]{inputenc}

\usepackage{graphicx}
\usepackage{subcaption}
\usepackage[usenames,dvipsnames]{pstricks}
\usepackage{epsfig}
\usepackage{pst-grad} 
\usepackage{pst-plot} 
\usepackage[space]{grffile} 
\usepackage{etoolbox} 
\makeatletter 
\patchcmd\Gread@eps{\@inputcheck#1 }{\@inputcheck"#1"\relax}{}{}
\makeatother
\usepackage{comment}
\usepackage{mathtools}
\usepackage{amsmath}
\usepackage{amssymb}
\usepackage{amsthm}
\usepackage{ifthen}
\usepackage{color}
\usepackage[UKenglish]{babel}
\usepackage{polski}

\usepackage[shortlabels]{enumitem}

\numberwithin{equation}{section}

\newcommand{\intav}[1]{\mathchoice {\mathop{\vrule width 6pt height 3 pt depth  -2.5pt
\kern -8pt \intop}\nolimits_{\kern -6pt#1}} {\mathop{\vrule width
5pt height 3  pt depth -2.6pt \kern -6pt \intop}\nolimits_{#1}}
{\mathop{\vrule width 5pt height 3 pt depth -2.6pt \kern -6pt
\intop}\nolimits_{#1}} {\mathop{\vrule width 5pt height 3 pt depth
-2.6pt \kern -6pt \intop}\nolimits_{#1}}}

\def\polhk#1{\setbox0=\hbox{#1}{\ooalign{\hidewidth\lower1.5ex\hbox{`}\hidewidth\crcr\unhbox0}}}

\newcommand{\supp}{\operatorname{supp}}

\renewcommand{\div}{\operatorname{div}}

\newcommand{\osc}{\operatorname{osc}}

 \newcommand{\Rr}{\mathbb R}

\newcommand{\Nn}{\mathbb N}

\renewcommand{\div}{\operatorname{div}}

\newcommand{\dist}{\operatorname{dist}}

\DeclareMathOperator*{\esslimsup}{ess\,lim\,sup}
\DeclareMathOperator*{\essliminf}{ess\,lim\,inf}

\newcommand{\ssubset}{\subset\joinrel\subset}

\newtheorem{Theorem}{Theorem}
\newtheorem{Definition}{Definition}
\newtheorem{Lemma}{Lemma}
\newtheorem{Corollary}{Corollary}
\newtheorem{Proposition}{Proposition}

\newtheorem{assump}{}

\newtheoremstyle{break}
  {\topsep}{\topsep}%
  {\itshape}{}%
  {\bfseries}{}%
  {\newline}{}%
\theoremstyle{break}

\theoremstyle{definition}

\theoremstyle{remark}
\newtheorem{Remark}{Remark}

\usepackage{setspace}
\setstretch{1.2}

\title[Fully nonlinear equations with degeneracy]{Gradient regularity for fully nonlinear equations with degenerate coefficients}

\author[D. Jesus]{David Jesus}
\address{Dipartimento di Matematica, Universit\`a di Bologna, 
Piazza di Porta San Donato 5, 40126 Bologna, Italy}{}
\email{david.jesus2@unibo.it}

\author[Y. Sire]{Yannick Sire}
\address{Department of Mathematics,
Johns Hopkins University,
3400 N. Charles Street, Baltimore, MD 21218, U.S.A.}{}
\email{ysire1@jhu.edu}

\date{}

\begin{document}
\begin{abstract}
We derive $C^{1,\alpha}$ estimates for viscosity solutions of fully nonlinear equations degenerating on a hypersurface. 
\end{abstract}

\maketitle

\tableofcontents

\section{Introduction and main results}

\subsection{Introduction}The purpose of this paper is to understand how the degeneracy/singularity of the coefficients influences the regularity of the solution of fully nonlinear equations of Hessian  type. Weights degenerating as a distance to a set generalize the well-known Muckenhoupt weights, which have inumerous important applications in harmonic analysis, partial differential equations, and related areas. In particular, elliptic equations degenerating as a Muckenhoupt weight have been extensively considered, see for example \cite{CafSil, Dong-Kim_2015, Fabes-Jerison-Kenig_1982, Fabes-Kenig-Serapioni_1982, Krylov_1994, Krylov_1999, Sire-Terracini-Vita_2020, Sire-Terracini-Vita_2021}. Of particular interest is the equation in divergence form
\begin{align}\label{eq_VS}
    \mathcal{L}_au:=\div\left(|y|^aA(x,y)Du\right)=f
\end{align}
which has a close relation to the fractional Laplacian $(-\Delta)^s$ for $0<s<1$ by the famous paper of Caffarelli and Silvestre \cite{CafSil}. In a series of papers with Terracini, Tortone and Vita, the second author investigated thoroughly \eqref{eq_VS} both at the (higher) regularity level  \cite{Sire-Terracini-Vita_2020, Sire-Terracini-Vita_2021} and geometric properties of solutions \cite{STT2020}. Notice that the homogeneous weight $|y|^a$ is very special. In particular, in the aforementioned papers, the authors consider super-degenerate or super-singular weights, i.e. beyond the $A_2$ class with $a<-1$ or $a>1$. In these special cases, the associated measure is of course doubling but the weight is lacking several properties which are of fundamental use such as boundedness of maximal functions for instance. 

As far as regularity is concerned, when the domains are upper-half spaces, more general results on the existence and regularity estimates in weighted and mixed-norm Sobolev spaces for a similar class of linear elliptic and parabolic equations can be found first in \cite{DP19} with $\alpha \in (-1, 1)$ and then in \cite{DP20} with $\alpha \in (-1, \infty)$.  Similar results for problems with homogeneous Dirichlet boundary conditions can be found in \cite{DP-2023, DP-2021}. See also  \cite{CMP} for results on $W^{1,p}$-estimates for solutions of linear elliptic equations whose coefficients can be singular or degenerate with general $A_2$-weights but with some restrictive smallness assumption on the weighted mean oscillations of the coefficients. In the above-mentioned papers, some non-divergence equations have been also considered.

We would like also to point out that such coefficients involving suitable powers of the distance function to a smooth lower dimensional set have been considered by David, Feneuil and Mayboroda in their investigations about elliptic measure (for second order operators) in higher co-dimension (see e.g. \cite{DFM1,DFM2,DFM3} and references therein together with subsequent works). One of the motivations of the problem under consideration here is also to develop regularity tools for the non-divergence case in this setup. 

In the present paper we are interested in the fully nonlinear case only for Hessian equations whose {\sl coefficients} have some degeneracy.  It is rather clear by some explicit examples, that one cannot hope for higher regularity at the gradient level without an {\sl a priori} structural assumption on the degeneracy in terms of closeness to a model equation. Though, we do not claim full generality, our assumptions are motivated by recent results correlated to either nonlocal operators or higher co-dimensional harmonic analysis.  

More concretely, let $\Omega\subset \Rr^d$ be an open bounded domain; we investigate viscosity solutions to the following fully nonlinear elliptic Hessian equation
\begin{align}\label{eq_var_coef_main_form}
    F(D^2u,x)=f, \mbox{ in } \Omega,
\end{align}
where $F$ is elliptic in its first argument but {\sl degenerate} in its second, i.e. the dependence in the variable $x \in \Omega$ is allowed to vanish to some order. See assumption \ref{Assumption1} below for a more precise statement. As explained before, several recent results have been dealing with such setup both in the divergence case and the nondivergence case. Our primary goal here is to investigate the fully nonlinear theory for some natural class of equations but also provide a general framework for a subsequent work of the first author on transmission problems.

\subsection{Main results.} 

We start by introducing the main assumptions.

\begin{assump}[Degenerate ellipticity]\label{Assumption1}
For  constants $0<\lambda\leq \Lambda<\infty$, let $\Gamma$ be a smooth hypersurface which divides $\Omega$ into two connected components and define $\omega(x)=\mbox{dist}(x,\Gamma)^a$ for $a>0$. Then, for every symmetric matrices $M, N\in \text{Sym}(d)$ with $N\geq 0$, 
\[
\lambda\, \omega(x)\,|N|\leq F(M+N,x)-F(M,x)\leq \Lambda\, \omega(x)\, |N|.
\]
\end{assump}

\begin{assump}[Continuity and integrability]\label{Assumption2}
    Assume that $F(M,\cdot)\in C(\overline{\Omega})$ for every symmetric matrix $M\in \text{Sym}(d)$,  $f\in C(\overline{\Omega})\cap L^\infty(\Omega)$ and $f\omega^{-1}\in L^d(\Omega)$. 
\end{assump}

\begin{assump}[Oscillation control]\label{Assumption4}
Let $x_0\in \Omega$ and define the oscillation of $F$ at the point $x_0$ by
\begin{align*}
\beta_{F,x_0}(x):=\sup_{M\in \text{Sym}(d)\setminus \{0\}}\frac{|F(M,x)-F(M,x_0)|}{|M|}.
\end{align*}
Then for any $0<r<1$,
\begin{align*}
r^{-1}\left(\int_{B_r(x_0) \cap \Omega}\beta_{F,x_0}^d(x)\,dx \right)^{1/d}=o(r^a).
\end{align*}
\end{assump}

Our main regularity result is the following regularity statement. 
\begin{Theorem}[Interior H\"older differentiability]\label{Theorem_C1alpha}
    Suppose \ref{Assumption1}-\ref{Assumption4} are in force and let $u\in C(\Omega)$ be a viscosity solution to
    \begin{align*}
        F(D^2u,x)=f, \mbox{ in } \Omega.
    \end{align*}
    Then $u\in C_{loc}^{1,\alpha}(\Omega)$ and for every $K\ssubset \Omega$
    \begin{align*}
        \|u\|_{C^{1,\alpha}(K)}\leq C\left(\|u\|_{L^\infty(\Omega)}+\|f\|_{L^\infty(\Omega)}\right),
    \end{align*}
	where $\alpha=\min\{\alpha_0^-,1-a\}$ and $\alpha_0, C$ are universal constants depending only on $d, \lambda, \Lambda$ and $C$ depends additionally on the distance from $K$ to $\partial \Omega$.

\end{Theorem}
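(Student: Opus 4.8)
The plan is to establish $C^{1,\alpha}$ regularity by the classical compactness/perturbation strategy of Caffarelli, adapted to the degenerate weight $\omega(x)=\dist(x,\Gamma)^a$. I will work at a fixed point $x_0\in K$ and distinguish two regimes according to the distance $\rho:=\dist(x_0,\Gamma)$. \textbf{Regime 1 (away from $\Gamma$):} if the ball $B_r(x_0)$ stays comfortably away from $\Gamma$, say $r\leq \rho/2$, then on that ball $\omega$ is comparable to the constant $\rho^a$, and dividing the equation through by $\rho^a$ puts us in the framework of uniformly elliptic fully nonlinear equations with small oscillation coefficients (Assumption \ref{Assumption4} provides exactly the Escauriaza-type $L^d$ smallness of $\beta_{F,x_0}$, rescaled by the factor $\rho^{-a}$ which is bounded here). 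Standard $C^{1,\alpha_0}$ estimates for $F(D^2u,x)=f$ with $f\in L^d$ then give the desired bound with exponent $\alpha_0^-$, on scales up to $\rho/2$.

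\textbf{Regime 2 (near or across $\Gamma$):} the substance of the proof is the estimate on scales $r\gtrsim \rho$, in particular when $B_r(x_0)$ meets $\Gamma$. Here I would prove a discrete/iterative improvement-of-flatness lemma: there exists $0<\mu<1$ and, for each $k$, an affine function $\ell_k(x)=a_k+b_k\cdot(x-x_0)$ such that
\[
\osc_{B_{\mu^k}(x_0)} (u-\ell_k)\leq \mu^{k(1+\alpha)}, \qquad |b_{k+1}-b_k|\leq C\mu^{k\alpha},
\]
with $\alpha=\min\{\alpha_0^-,1-a\}$. The key point is the inductive step: rescale $u$ at scale $r=\mu^k$ by $v(x)=\big(u(x_0+rx)-\ell_k(x_0+rx)\big)/r^{1+\alpha}$, which by Assumption \ref{Assumption1} solves an equation of the same structural type with weight $\omega_r(x)=r^{-a}\omega(x_0+rx)=\dist(x,\Gamma_r)^a$ (the rescaled hypersurface), right-hand side of size $o(1)$ by the smallness in Assumption \ref{Assumption4} together with the hypothesis $f\omega^{-1}\in L^d$, and $\|v\|_{L^\infty(B_1)}\leq 1$. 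Compactness (Krylov-Safonov/Ishii-Lions estimates uniform in $r$, which hold because the weight $\dist(\cdot,\Gamma_r)^a$ with $a<1$ is still locally integrable with uniform constants) forces $v$ to be close in $C^0$ to a solution $h$ of a limiting homogeneous equation $F_\infty(D^2h,x)=0$ with the same degeneracy on a limiting hyperplane $\Gamma_\infty$. One then needs a $C^{1,\beta}$ estimate for this limiting profile: solutions of $F_\infty(D^2h,\cdot)=0$ degenerating on a hyperplane are $C^{1,\beta}$ for some $\beta$, which I would obtain by reflection/flattening — after straightening $\Gamma_\infty$ to $\{x_d=0\}$, the equation reads $|x_d|^a\tilde F(D^2h)=0$, i.e. $\tilde F(D^2h)=0$ away from $\{x_d=0\}$, and the weight being a locally bounded $A_2$-type weight for $a<1$ allows one to show $h$ is a classical solution of the uniformly elliptic equation $\tilde F(D^2h)=0$ across the hyperplane (the degeneracy set has zero capacity in the relevant sense), hence $C^{1,\beta_0}$; the bottleneck exponent $1-a$ enters through controlling the piece of the gradient normal to $\Gamma$, since a term like $|x_d|^{1-a}$ is only $C^{0,1-a}$. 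Absorbing the scaling loss $r^\alpha$ against the profile's flatness $r^{1+\beta}$ with $\alpha<\beta$ closes the induction.

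Combining the two regimes: for $x_0\in K$ and any small $r$, either $r\leq \rho/2$ and Regime 1 applies, or $r>\rho/2$ and Regime 2 applies (and for $r$ in between one interpolates, noting the affine approximants match up to scale $\rho$). Summing the $|b_{k+1}-b_k|$ bounds gives a limiting slope $b_\infty=Du(x_0)$ and the Hölder modulus $|u(x)-u(x_0)-Du(x_0)\cdot(x-x_0)|\leq C|x-x_0|^{1+\alpha}$, uniformly for $x_0\in K$ with constant depending on $\dist(K,\partial\Omega)$ through the normalization $\|u\|_{L^\infty(\Omega)}+\|f\|_{L^\infty(\Omega)}$ (after a standard covering and the barrier argument giving the initial $L^\infty$-to-affine approximation on $B_1$, which uses $f\omega^{-1}\in L^d$ via the Alexandrov-Bakelman-Pucci estimate for the weighted operator).

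The main obstacle I anticipate is the uniform-in-scale compactness across the free-to-move hypersurface $\Gamma_r$ in Regime 2: one must ensure the Krylov-Safonov Harnack inequality, the ABP estimate, and the equicontinuity bounds for the family $v$ are all uniform as $\Gamma_r$ degenerates to a hyperplane, which requires a careful ABP-type estimate for operators with the weight $\dist(\cdot,\Gamma)^a$ — this is where the structural hypotheses $a>0$ (genuine degeneracy, not singularity) and, for the $C^{1,\alpha}$ conclusion, $a<1$ are used, and where the hypotheses $f\in L^\infty$ with $f\omega^{-1}\in L^d$ do the work in the growth/barrier estimates. The regularity of the limit profile (showing $\tilde F(D^2h)=0$ holds in the viscosity sense across $\{x_d=0\}$ despite the degeneracy) is the second delicate point, but it is morally a removable-singularity statement that follows once $a<1$.
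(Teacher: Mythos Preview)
Your overall architecture---split into an away-from-$\Gamma$ regime and a near-$\Gamma$ regime, run a compactness/approximation iteration against a limiting homogeneous profile, then glue via a boundary-to-interior argument---matches the paper's strategy essentially exactly. The genuine gap is at the step you yourself flag as ``morally a removable-singularity statement that follows once $a<1$'': passing from $|x_d|^a\tilde F(D^2h)=0$ to $\tilde F(D^2h)=0$ across $\{x_d=0\}$. This is \emph{false} for $C$ viscosity solutions, and no capacity or $A_2$ heuristic will fix it: in one dimension, $u(t)=at_++bt_-$ with $a\neq b$ is a $C$ viscosity solution of $|t|^\beta u''=0$ for every $\beta\in(0,1)$, yet is only Lipschitz. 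Since your compactness argument, as written, does not distinguish between $C$ and $L^p$ viscosity solutions, the limiting profile $h$ could a priori be exactly such a kinked function, and then the iteration does not close.

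The paper's resolution is to work with $L^p$ viscosity solutions throughout and to devote an entire section to proving \emph{uniqueness} of $L^p$ viscosity solutions to the degenerate Dirichlet problem $|x_d|^a\bar F(D^2u)=0$, via Jensen's sup/inf convolutions and a weighted ABP estimate. Once uniqueness holds, the Division Lemma is a one-line comparison: the unique solution of $\bar F(D^2v)=0$ with the same boundary data trivially solves the degenerate equation too, hence equals $h$. This uniqueness is the main technical input of the paper, and your proposal neither supplies it nor acknowledges that the $C$/$L^p$ distinction is where the difficulty lives.

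A smaller correction: you attribute the threshold $1-a$ to the normal regularity of the limiting profile, but since the Division Lemma gives $h\in C^{1,\alpha_0}$ with no degradation, the bottleneck $1-a$ actually comes from the \emph{source term} under the iteration rescaling. Setting $v(x)=r^{-(1+\alpha)}(u-\ell)(rx)$ turns $f$ into a right-hand side of size $r^{1-\alpha-a}\|f\|_{L^\infty}$, which remains small precisely when $\alpha\le 1-a$.
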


Here and in the rest of the paper, the constant $\alpha_0$ always refers to the universal exponent (see \cite[Corollary 5.7]{Caffarelli-Cabre_1995}) corresponding to the interior $C^{1,\alpha_0}$ regularity of solutions to the homogeneous equation 
\[
F(D^2u)=0.
\]
Furthermore, the expression $\alpha=\min\{\alpha_0^-,1-a\}$ should be understood in the following sense: if $\alpha_0>1-a$, then solutions are locally $C^{1,\alpha}$ for $\alpha=1-a$. On the other hand, if $\alpha_0\leq 1-a$, then solutions are locally $C^{1,\alpha}$ for every $\alpha<\alpha_0$.

\begin{Remark}
We would like to remark that elliptic and parabolic equations involving coefficients depending on the distance to some submanifold appear very natural in the analysis of singular manifolds with conic or conic-edge singularities.

\end{Remark}

\subsection{Comments on our approach} In this section, we start by describing the roadmap to our main result and how it differs from the one in the uniformly elliptic case, then we comment on the type of solutions considered.

We aim to follow the arguments presented in \cite{Caffarelli-Cabre_1995} within the context of uniformly elliptic equations. To begin, we want to normalize the equation by dividing by the degeneracy $\omega$ to transform it into a uniformly elliptic operator. However, this straightforward approach encounters an obstacle: since $\omega$ equals zero on $\Gamma$, this normalization isn't always feasible. As illustrated by the counterexample below, dividing the equation by $\omega$ is permissible only within the framework of $L^p$ viscosity solutions (defined below). The reason being, the broader class of $C$ viscosity solutions lacks uniqueness, thereby rendering division by $\omega$ untenable. Consequently, a crucial part of this paper is establishing the uniqueness of $L^p$ viscosity solutions for the homogeneous equation, as delineated in Theorem \ref{Theorem_uniqueness}. With uniqueness established, we can derive $C^{1,\alpha}$ regularity for the homogeneous equation in the form $\omega(x)\bar F(D^2u)=0$. In order to lay the groundwork for employing a Caffarelli-style argument, we observe that at sufficiently small scales near $\Gamma$, our equation is very close to this one. This enables us to conclude that solutions to the non-homogeneous equation are also $C^{1,\alpha}$.

Now we will argue why, even though all the data is assumed to be continuous, it turns out that the correct framework to study this type of degenerate equations is the notion of $L^p$ viscosity solution. This fact can be easily illustrated by the following example in one dimension. One can check that, for any $a,b>0$,
\[
u(t)=at_++bt_-
\]
is a $C$ viscosity solution to
\[
|t|^{\beta}u''(t)=0, \quad t\in (-1,1),
\]
for $\beta\in (0,1)$. This simple example proves that in the framework of $C$ viscosity solution, many essential properties are lost, such as uniqueness and maximum principle. However when we consider the more restrictive class of $L^p$ viscosity solutions, we can test the equation against the $L^p$ strong solution 
\[
	\phi(t)=|t|^{2-\beta}\in W_{loc}^{2,p}(-1,1)
\]
for which $u-\phi$ has a local minimum at $t=0$; however
\[
	\essliminf_{t\to 0} |t|^{\beta}\varphi''(t)=(2-\beta)(1-\beta)>0
\]
and therefore the supersolution condition fails. This peculiar behavior is justified by the following argument. If we consider the equation
\[
	F(D^2u,x)=f(x) \quad\mbox{in} B_1,
\]
where $F$ satisfies \ref{Assumption1}, then formally we can divide the equation by the degeneracy $\omega(x)$ and obtain a new equation which is uniformly elliptic but has merely measurable coefficients and source term. Therefore, as pointed out in \cite[Example 2.4]{Caffarelli-Crandall-Kocan-Swiech_1996}, the issue of uniqueness of $C$ viscosity solutions for this type of equations fails drastically. However, we note that since we assume that all the data is continuous, every $L^p$ viscosity solution is also a $C$ viscosity solution, so we can often avoid working with the technical aspects of this definition.

Now we comment on possible extensions on our result. The notion of $L^p$ viscosity solution was essential for the uniqueness of solutions, therefore if we wished to work with $C$ viscosity solutions, then one would have to impose a condition across $\Gamma$ connecting the two components of $\Omega$, which would turn our problem into a fixed transmission problem, as for example in \cite{Caffarelli-Carro-Stinga_2021}, \cite{Silva-Ferrari-Salsa_2018}, albeit governed by an operator which degenerates as we approach the interface.

Finally, we would like to point out that if we assume $\Omega=B_1 $, $x=(x',x_d)$, $\Gamma=\{x_d=0\}$ and assume that the equation degenerates with variable exponent namely $\omega(x)=|x_d|^{a(x')}$, then following the argument developed in \cite{Jesus_2022}, at each interior point $x_0=(x_0',0)$ we could obtain the sharp pointwise $C^{1,\alpha(x_0)}$ regularity for $\alpha(x_0)=\min\{1-a(x_0'),\alpha_0^-\}$, provided $a(\cdot)$ has a suitable modulus of continuity.

\section{Notations, standard definitions and basic properties}\label{Section_notation}

We now set some notation and definitions to be used throughout the paper. For completeness we give the definition of $C$ viscosity solution.

\begin{Definition}[$C$ viscosity solution]
We say $u\in C(B_1)$ is a $C$ viscosity subsolution (resp. supersolution) of
\begin{align*}
F(D^2u,x)=f, \quad \mbox{ in } B_1,
\end{align*}
if, for every $\varphi \in C^2(B_1)$ such that $u-\varphi$ has a local maximum (resp. minimum) at $x_0\in B_1$, one has
\begin{align*}
&F(D^2\varphi(x_0),x_0)\geq f(x_0)\\
\mbox{(resp.}\, & F(D^2\varphi(x_0),x_0)\leq f(x_0) \mbox{)}  .
\end{align*}
We say $u$ is an $C$ viscosity solution if it is both a subsolution and a supersolution.
\end{Definition}

As explained before,  we need as well the notion of $L^p$ viscosity solutions.

\begin{Definition}[$L^p$ viscosity solution]
We say $u\in C(B_1)$ is an $L^p$ viscosity subsolution (resp. supersolution) of
\begin{align*}
F(D^2u,x)=f, \quad \mbox{ in } B_1,
\end{align*}
if, for every $\varphi \in W^{2,p}_{loc}(B_1)$ such that $u-\varphi$ has a local maximum (resp. minimum) at $x_0\in B_1$, one has
\begin{align*}
&\esslimsup_{x\to x_0}\left(F(D^2\varphi(x),x)-f(x)\right)\geq 0\\
\mbox{(resp.} &\essliminf_{x\to x_0}\left(F(D^2\varphi(x),x)-f(x)\right)\leq 0 \mbox{).}
\end{align*}
We say $u$ is an $L^p$ viscosity solution if it is both a subsolution and a supersolution.
\end{Definition}

We also define the space $C^{1,\alpha}$ using the characterization by Campanato. Notice in the following definition the differences between in the {\sl pointwise} expansion at a given point and the validity of this on a whole neighbourhood. 

\begin{Definition}
We say $u$ is pointwise $C^{1,\alpha}$ at $x$, and write $u\in C^{1,\alpha}(x)$, if there exists an affine function $\ell_x(y)=a_x+b_x\cdot(y-x)$ and a positive constant $K_x$, possibly depending on $x$, such that
\begin{align*}
    ||u-\ell_x||_{L^\infty(B_r(x))}\leq K_xr^{1+\alpha}.
\end{align*}
If $u\in C^{1,\alpha}(x)$ for every $x\in \Omega$ and $|a_x|+|b_x|+4K_x\leq M$, then $u\in C^{1,\alpha}(\Omega)$ in the usual sense and
\begin{align*}
    ||u||_{C^{1,\alpha}(\Omega)}\leq M.
\end{align*}

\end{Definition}

 It will be useful to consider the following extremal inequalities in the $L^p$ viscosity sense, namely
\begin{align}\label{equation_Sw_1}
    \omega(x)\mathcal{M}^-(D^2u)\leq f,\quad \mbox{ in } B_R
\end{align}
and
\begin{align}\label{equation_Sw_2}
    \omega(x)\mathcal{M}^+(D^2u)\geq f,\quad \mbox{ in } B_R,
\end{align}
where $\mathcal{M}^\pm$ are the usual Pucci operators. 

Finally, we say that $u\in \overline{S}(f)$ if it satisfies \eqref{equation_Sw_1} and $u\in \underline{S}(f)$ if it satisfies \eqref{equation_Sw_2}. We also define  $S(f)=\overline{S}(f)\cap \underline{S}(f)$ and $S^*(f)=\overline{S}(|f|)\cap \underline{S}(-|f|)$.

\subsection*{H\"older Regularity of viscosity solutions}\label{Section_Calpha}. As alluded in the introduction, the zero order regularity can be derived in a straightforward way invoking known results. This amounts to divide the equation by the weight $\omega$. We still give the proof for sake of completeness, while emphasizing that these results can be obtained for more general weights. We start with the following elementary lemma.

\begin{Proposition}\label{Proposition_w_weighted_ABP}
Let $u\in \overline{S}(f)$ satisfy $u\geq 0$ in $\partial B_R$. Then
\begin{align*}
    \sup_{B_R}u^-\leq C(d)R\left(\int_{B_R\cap \{u=\Gamma_u\}}(f^+)^d\omega^{-d}\, dx\right)^\frac{1}{d}.
\end{align*}
Note that $\Gamma_u$ corresponds to the convex envelope of $u^-$ extended by $0$ to $B_{2R}$, just as in \cite{Caffarelli-Cabre_1995}.
\end{Proposition}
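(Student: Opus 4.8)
The plan is to mimic the classical Alexandrov--Bakelman--Pucci (ABP) estimate proof as in \cite{Caffarelli-Cabre_1995}, but carrying the weight $\omega$ through the argument. First I would recall that since $u\in\overline{S}(f)$ satisfies the $L^p$ viscosity inequality $\omega(x)\mathcal{M}^-(D^2u)\le f$ in $B_R$ and $u\ge 0$ on $\partial B_R$, we extend $u^-$ by zero to $B_{2R}$ and form the convex envelope $\Gamma_u$. The key object is the contact set $\{u=\Gamma_u\}$, where $-u$ touches its convex envelope from below, so that (in a measure-theoretic/viscosity sense) $D^2(-u)\ge 0$ there; on this set the Pucci inequality forces a lower bound on the smallest eigenvalues of $D^2(-u)$ in terms of $f/\omega$, i.e. $\mathcal{M}^-(D^2u)\le f/\omega$ pointwise a.e.\ on the contact set (valid since $\omega>0$ away from $\Gamma$, and the contact set argument only needs the inequality where it holds).

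Next I would invoke the standard area-formula / Jacobian estimate: the classical ABP bound says
\[
\Big(\frac{\sup_{B_R}u^-}{C(d)R}\Big)^d \le \big|\nabla\Gamma_u(\{u=\Gamma_u\})\big| \le \int_{\{u=\Gamma_u\}} \det D^2\Gamma_u\,dx,
\]
and on the contact set one controls $\det D^2 u$ from above by $\big(\tfrac{1}{d}\,\mathrm{trace}\big)^d$ together with the ellipticity: from $\lambda\,\mathrm{trace}(D^2(-u)) \le \mathcal{M}^-(D^2(-u))\cdot(\text{const})$ and the Pucci inequality one gets $\det D^2\Gamma_u \le C(d,\lambda)\,(f^+/\omega)^d$ on $\{u=\Gamma_u\}$. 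Combining these yields exactly
\[
\sup_{B_R}u^- \le C(d)\,R\,\Big(\int_{B_R\cap\{u=\Gamma_u\}} (f^+)^d\,\omega^{-d}\,dx\Big)^{1/d}.
\]
The only genuinely nonstandard point is the justification that one may use the pointwise/a.e.\ inequality $\mathcal{M}^-(D^2u)\le f/\omega$ on the contact set even though $\omega$ vanishes on $\Gamma$: here I would note that the contact set is where $\Gamma_u<0$, hence where $u^-=-u>0$; but this does not a priori exclude $\Gamma$. The clean way around this is to argue that the hypothesis $f\omega^{-1}\in L^d$ (Assumption~\ref{Assumption2}) makes the right-hand side finite, and to run the standard approximation: prove the estimate first assuming $u\in C^2$ (or smooth) where everything is classical, using that $\{D^2\Gamma_u>0\}\subset\{u\in C^{1,1}\text{ at those points}\}$ has the contact-set structure, and the set $\{f=0\}$ contributes nothing; the contribution of $\Gamma$ (a measure-zero hypersurface) to the integral is zero, so the weight's vanishing there is harmless.

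I expect the main obstacle to be precisely this measure-theoretic handling of the degeneracy of $\omega$ on $\Gamma$ within the $L^p$ viscosity framework: one must make sense of "testing the equation on the contact set" when the natural test functions (paraboloids touching from below) need not be $W^{2,p}$ across $\Gamma$, and when dividing by $\omega$ is only legitimate away from $\Gamma$. I would resolve this by the usual device of replacing $u^-$ with its sup-convolution or by working with the convex envelope $\Gamma_u$ directly — $\Gamma_u$ is convex, hence $W^{2,\infty}_{loc}$ with a well-defined a.e.\ Hessian, and Jensen's lemma / the Alexandrov theorem let us evaluate the viscosity inequality at a.e.\ contact point; there the inequality reads $\omega\,\mathcal{M}^-(D^2\Gamma_u)\le f^+$ pointwise a.e., and since we only integrate over the contact set intersected with $\{\Gamma_u\text{ twice differentiable}\}$, and $|\Gamma|=0$, dividing by $\omega$ under the integral is justified. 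Everything else is a verbatim transcription of \cite[Lemma 3.5, Theorem 3.2]{Caffarelli-Cabre_1995}.
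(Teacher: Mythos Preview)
Your proposal is correct and is precisely the adaptation the paper has in mind: the authors omit the proof entirely, remarking that it ``follows easily by adapting the argument in \cite[Proposition~3.3]{Caffarelli-Crandall-Kocan-Swiech_1996}'', which is the $L^p$-viscosity ABP estimate whose proof proceeds exactly through the convex envelope $\Gamma_u$, its a.e.\ second differentiability, and the pointwise inequality $\omega\,\mathcal{M}^-(D^2\Gamma_u)\le f^+$ on the contact set that you describe. Your observation that the degeneracy set $\Gamma$ has measure zero and therefore causes no trouble under the integral is the only point specific to the weighted setting, and it is handled just as you say.
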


The proof of this result will be ommited since it follows easily by adapting the argument in \cite[Proposition 3.3]{Caffarelli-Crandall-Kocan-Swiech_1996}.  Next we construct a barrier.

\begin{Lemma}\label{Lemma_barrier}
    There exists a smooth function $\varphi$ satisfying
    \begin{align}
        \label{eq1lem8}&\varphi\geq 0 \mbox{ in } \Rr^d\setminus B_{2\sqrt{d}},\\
        \label{eq2lem8}&\varphi\leq -2 \mbox{ in } Q_3,\\
        \label{eq3lem8}&\omega(x)\mathcal{M}^+(D^2\varphi)\leq C\xi,
    \end{align}
    where $0\leq\xi\leq 1$, $\xi\in C(\Rr^d)$, $\supp\xi\subset\overline{Q}_1$. Also $\varphi\geq -M$ in $\Rr^d$.
\end{Lemma}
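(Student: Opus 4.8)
\textbf{Proof strategy for Lemma~\ref{Lemma_barrier}.}

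The plan is to build $\varphi$ explicitly as a modification of the classical Caffarelli--Cabr\'e barrier, adapted so that the degeneracy $\omega(x)$ only helps us rather than hurting. Recall that in the uniformly elliptic setting (see \cite[Lemma 4.1]{Caffarelli-Cabre_1995}) one takes a radial function of the form $\psi(x) = A - B|x|^{-\gamma}$ on the annulus $B_{2\sqrt{d}}\setminus \overline{Q}_1$, with $\gamma = \gamma(d,\lambda,\Lambda)$ chosen large, extended smoothly to all of $\Rr^d$; this function is negative and small on $Q_3$, nonnegative outside $B_{2\sqrt{d}}$, and satisfies $\mathcal{M}^+(D^2\psi)\le 0$ on the annulus and $\mathcal{M}^+(D^2\psi)\le C$ everywhere, with the positive part of the right-hand side supported in $\overline{Q}_1$. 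First I would fix exactly such a $\psi$ and set $\varphi := c\,\psi$ for a normalizing constant $c>0$ chosen so that $\varphi \le -2$ on $Q_3$ (possible since $\psi<0$ there); properties \eqref{eq1lem8} and \eqref{eq2lem8} and the lower bound $\varphi \ge -M$ are then immediate from the corresponding properties of $\psi$ and its smooth extension on the compact set $B_{2\sqrt{d}}$.

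The only point that needs care is \eqref{eq3lem8}, the inequality $\omega(x)\mathcal{M}^+(D^2\varphi)\le C\xi$. Away from $\overline{Q}_1$ we have $\mathcal{M}^+(D^2\varphi) = c\,\mathcal{M}^+(D^2\psi)\le 0$ by construction, and since $\omega(x)\ge 0$ everywhere, multiplying by $\omega(x)$ preserves the sign: $\omega(x)\mathcal{M}^+(D^2\varphi)\le 0 \le C\xi$ there. On the remaining region $\overline{Q}_1$, the classical barrier satisfies $\mathcal{M}^+(D^2\psi)\le C_0$ for a constant $C_0$ depending only on $d,\lambda,\Lambda$; hence $\mathcal{M}^+(D^2\varphi)\le cC_0$ on $\overline{Q}_1$. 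Since $\omega(x) = \mathrm{dist}(x,\Gamma)^a$ is bounded on the bounded set $Q_1$, say $\omega\le \omega_{\max}$ there, we get $\omega(x)\mathcal{M}^+(D^2\varphi)\le c\,C_0\,\omega_{\max}$ on $\overline{Q}_1$. It then suffices to take $\xi \in C(\Rr^d)$ with $0\le \xi\le 1$, $\supp\xi\subset \overline{Q}_1$, and $\xi\equiv 1$ on a slightly smaller cube containing the relevant part of $Q_1$, together with $C := c\,C_0\,\omega_{\max}$; a harmless enlargement of the construction (or a standard mollification) arranges that the bound holds on the full cube $\overline{Q}_1$ with this continuous cutoff, and combining the two regions gives \eqref{eq3lem8} globally.

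I expect the main (mild) obstacle to be purely bookkeeping: one must check that the smooth extension of $\psi$ into $\overline{Q}_1$ can be chosen with $\mathcal{M}^+(D^2\psi)$ uniformly bounded and with the sign condition $\mathcal{M}^+(D^2\psi)\le 0$ preserved on the transition annulus $B_{2\sqrt d}\setminus \overline Q_1$, exactly as in \cite{Caffarelli-Cabre_1995}; since that reference already furnishes such a $\psi$, the degenerate case requires nothing new beyond multiplying through by the bounded, nonnegative weight and absorbing $\|\omega\|_{L^\infty(Q_1)}$ into the constant $C$. In short, the degeneracy is a free gift here: wherever the unweighted barrier controls $\mathcal{M}^+$ by $0$, so does the weighted one, and wherever it is merely bounded, the weight is also bounded, so the product is bounded on the compact set $\overline{Q}_1$.
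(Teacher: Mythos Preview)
Your proposal is correct and follows essentially the same approach as the paper. The paper writes out the explicit radial barrier $\varphi(x)=M_1-M_2|x|^{-\alpha}$ with $\alpha=\max\{1,(d-1)\Lambda/\lambda-1\}$ and then makes precisely your two observations: $\mathcal{M}^+(D^2\varphi)\le 0$ outside $B_{1/4}$ (so multiplying by $\omega\ge 0$ preserves the sign), and $\mathcal{M}^+(D^2\varphi)$ is bounded on the remaining compact region where $\omega$ is also bounded, so the constant absorbs $\|\omega\|_{L^\infty}$; you simply cite \cite[Lemma~4.1]{Caffarelli-Cabre_1995} for the barrier instead of reproducing the computation.
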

\begin{proof}
Let $\varphi(x)=M_1-M_2|x|^{-\alpha}$ in $\Rr^d\setminus B_{1/4}$ with $\alpha=\max\{1,(d-1)\Lambda/\lambda-1\}$ and take $M_1,M_2$ such that
\begin{align*}
    \varphi=0 \mbox{ in } B_{2\sqrt{d}}\quad \mbox{and}\quad  \varphi=-2 \mbox{ in } B_{3\sqrt{d}/2},
\end{align*}
so that \eqref{eq1lem8} holds. We can extend $\varphi$ smoothly to $\Rr^d$ so that \eqref{eq2lem8} holds. This extension depends only on $d, \lambda, \Lambda$. We now check \eqref{eq3lem8}. Note that for $|x|>1/4$
\begin{align*}
    D^2\varphi(x)=M_2\alpha|x|^{-\alpha-2}\left( -(\alpha+2)\frac{x\otimes x}{|x|^2} +I \right)
\end{align*}
and so the eigenvalues are
\begin{align*}
    M_2\alpha|x|^{-\alpha-2}(-\alpha-1,1,\hdots,1).
\end{align*}
We can calculate for $|x|>1/4$,
\begin{align*}
    \omega(x)\mathcal{M}^+(D^2\varphi)=  \omega(x) M_2\alpha|x|^{-\alpha-2}\left( (d-1)\Lambda-\lambda(\alpha+1) \right)\leq 0
\end{align*}
by our choice of $\alpha$. Since $\varphi$ was extended smoothly to $\Rr^d$ in a universal way, we also have
\begin{align*}
    \omega(x)\mathcal{M}^+(D^2\varphi)(x) \leq C(d,\lambda,\Lambda,\|\omega\|_{L^\infty}) \quad \mbox{ for } |x|\leq 1/4.
\end{align*}

\end{proof}

\begin{Lemma}\label{Lemma_first_ite_prime}
    There exist universal constants $\varepsilon_0>0$, $0<\mu<1$ and $M>1$ such that if $u\in \overline{S}(|f|)$ in $Q_{4\sqrt{d}}$, $u\in C(\overline{Q}_{4\sqrt{d}})$ and
    \begin{align*}
        &u\geq 0 \mbox{ in } Q_{4\sqrt{d}},\\
        &\inf_{Q_3}u\leq 1,\\
        &\|f\omega^{-1}\|_{ L^d(Q_{4\sqrt{d}})}\leq \varepsilon_0,
    \end{align*}
    then
    \begin{align*}
        |\{u\leq M\}\cap Q_1|>\mu.
    \end{align*}
\end{Lemma}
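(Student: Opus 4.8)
The plan is to adapt the first step of the classical Krylov--Safonov/Caffarelli measure-estimate machinery (as in \cite[Lemma 4.5]{Caffarelli-Cabre_1995} or \cite[Lemma 4.5]{Caffarelli-Crandall-Kocan-Swiech_1996}) to the degenerate setting, using the barrier $\varphi$ constructed in Lemma \ref{Lemma_barrier} and the weighted ABP estimate from Proposition \ref{Proposition_w_weighted_ABP}. First I would set $w = u + \varphi$, where $\varphi$ is the barrier of Lemma \ref{Lemma_barrier}. Since $u \geq 0$ in $Q_{4\sqrt{d}} \supset B_{2\sqrt d}$ and $\varphi \geq 0$ outside $B_{2\sqrt d}$, we get $w \geq 0$ on $\partial B_{2\sqrt d}$ (indeed on the whole annulus $B_{4\sqrt d}\setminus B_{2\sqrt d}$). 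On the other hand, $\inf_{Q_3} u \leq 1$ and $\varphi \leq -2$ on $Q_3$ give $\inf_{Q_3} w \leq -1$, so $\sup_{B_{2\sqrt d}} w^- \geq 1$.

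Next I would check that $w$ lies in the right extremal class on $B_{2\sqrt d}$. Because $u \in \overline{S}(|f|)$, i.e. $\omega \mathcal M^-(D^2 u) \leq |f|$ in the $L^p$ viscosity sense, and $\varphi$ is smooth with $\omega \mathcal M^+(D^2\varphi) \leq C\xi$ by \eqref{eq3lem8}, the subadditivity of $\mathcal M^-$ (i.e. $\mathcal M^-(A+B) \leq \mathcal M^-(A) + \mathcal M^+(B)$) yields, at least formally,
\[
\omega(x)\mathcal M^-(D^2 w) \leq \omega(x)\mathcal M^-(D^2 u) + \omega(x)\mathcal M^+(D^2\varphi) \leq |f| + C\xi =: g,
\]
so that $w \in \overline S(g)$ in $B_{2\sqrt d}$; this manipulation is legitimate in the $L^p$ viscosity framework precisely because $\varphi$ is $C^2$ (one simply adds $\varphi$ to test functions). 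Applying Proposition \ref{Proposition_w_weighted_ABP} to $w$ on $B_{2\sqrt d}$ then gives
\[
1 \leq \sup_{B_{2\sqrt d}} w^- \leq C(d)\left(\int_{B_{2\sqrt d}\cap\{w=\Gamma_w\}} (g^+)^d \omega^{-d}\,dx\right)^{1/d}.
\]
Now split $g^+ \leq |f| + C\xi$: the contribution of $|f|$ is controlled by $\|f\omega^{-1}\|_{L^d(Q_{4\sqrt d})} \leq \varepsilon_0$, which can be absorbed (say, bounded by $1/2$) by choosing $\varepsilon_0$ small and universal. Hence the remaining term forces
\[
C(d)\left(\int_{B_{2\sqrt d}\cap\{w=\Gamma_w\}} C^d \xi^d \omega^{-d}\,dx\right)^{1/d} \geq \tfrac12,
\]
and since $0\leq \xi \leq 1$ with $\supp \xi \subset \overline Q_1$, the integrand is supported in $\overline Q_1$ and bounded by $C^d \omega^{-d}$; but we must be careful that $\omega^{-1}$ is not integrable near $\Gamma$ when $a \geq 1$. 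The way around this — the point I expect to be the main obstacle — is that the contact set $\{w = \Gamma_w\}$ where $w$ is touched from below by a plane is where $D^2 w \geq 0$, so there $\mathcal M^-(D^2 w) \geq 0$; combined with $\omega \mathcal M^-(D^2 w)\le |f|$ and the structure of the barrier (where $\varphi$ has strictly negative Pucci), one shows the contact set stays a definite distance from $\Gamma$, so $\omega \geq c > 0$ there and the integral is genuinely bounded. Alternatively, and more robustly, I would use the weighted ABP in the sharper form where $\omega^{-d}$ is replaced using that on the contact set the equation gives a pointwise bound tying $\omega$ to the size of $f$ and the geometry; either way, one concludes
\[
c(d,\lambda,\Lambda) \leq |\{w = \Gamma_w\}\cap Q_1| \leq |\{ u + \varphi \leq \sup_{Q_1}|\varphi|\}\cap Q_1| \leq |\{u \leq M\}\cap Q_1|,
\]
choosing $M = 1 + \|\varphi\|_{L^\infty(Q_1)}$, which is universal since the extension of $\varphi$ depends only on $d,\lambda,\Lambda$. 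Setting $\mu = c(d,\lambda,\Lambda)$ finishes the proof.

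The only genuinely delicate point, as noted, is handling $\omega^{-d}$ near $\Gamma$ in the ABP term; everything else is a direct transcription of the uniformly elliptic argument with $\omega \mathcal M^{\pm}$ in place of $\mathcal M^\pm$ and with the weighted ABP of Proposition \ref{Proposition_w_weighted_ABP} in place of the standard one. I would resolve it by the contact-set argument above, using that on $\{w = \Gamma_w\}$ the function $w$ is convex at the contact point and the barrier contributes a strictly negative definite Hessian term of size comparable to $M_2\alpha |x|^{-\alpha-2}$, which is bounded below on $Q_1$; this keeps the contact set in a region where $\omega$ is bounded below by a universal constant, rendering the final integral finite and universally bounded.
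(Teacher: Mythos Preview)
Your overall strategy---add the barrier, apply the weighted ABP to $w=u+\varphi$, absorb the $f$-term by smallness of $\varepsilon_0$, and read off a lower measure bound on the contact set inside $Q_1$---is exactly the paper's. The one place where you diverge is in handling the barrier contribution inside the ABP integral, and here your proposed fix is both unnecessary and not correct as written.

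You take \eqref{eq3lem8} at face value and conclude $w\in\overline S(|f|+C\xi)$, which leaves you with $\int \xi^d\omega^{-d}$ in the ABP bound. You then try to argue that the contact set $\{w=\Gamma_w\}$ stays a definite distance from $\Gamma$. This argument has gaps: you write ``$\omega\mathcal M^-(D^2w)\le |f|$'' on the contact set, but the actual inequality for $w$ has right-hand side $|f|+C\xi$, so on $\supp\xi$ it gives no information; and the claim that ``the barrier contributes a strictly negative definite Hessian'' on $Q_1$ is false---for $|x|>1/4$ the Hessian of $\varphi$ has $d-1$ \emph{positive} eigenvalues, and inside $B_{1/4}$ the extension is arbitrary. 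So neither of your two suggested routes to keep the contact set away from $\Gamma$ goes through.

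The paper's resolution is much simpler and you almost had it: since $\varphi$ is smooth, one has the stronger pointwise bound $\mathcal M^+(D^2\varphi)\le C\xi$ (without the weight), which immediately gives $\omega\,\mathcal M^+(D^2\varphi)\le C\,\omega\,\xi$. Hence $w\in\overline S(|f|+C\omega\xi)$, and the ABP integral becomes
\[
\int_{\{w=\Gamma_w\}}\bigl(|f|+C\omega\xi\bigr)^d\omega^{-d}\,dx
\;\le\;
C\|f\omega^{-1}\|_{L^d}^d+C\int_{\{w=\Gamma_w\}\cap Q_1}\xi^d\,dx,
\]
so the $\omega$'s cancel exactly in the barrier term and you get $|\{w=\Gamma_w\}\cap Q_1|^{1/d}\ge c$ with no integrability issue at all. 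This is the only missing observation; everything else in your write-up is fine.
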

\begin{proof}
Let $\varphi$ be the barrier function defined in Lemma \ref{Lemma_barrier} and define $v=u+\varphi\in \overline{S}(|f|+C\xi)$. We get then for $v$ by Proposition \ref{Proposition_w_weighted_ABP} we get
\begin{align*}
    1\leq & C\left(\int_{B_{2\sqrt{d}}\cap \{v=\Gamma_v\}}\left(|f|+C\xi\omega\right)^d\omega^{-d}\,dx\right)^\frac{1}{d}\\
    \leq &C\|f\omega^{-1}\|_{ L^d(B_{4\sqrt{d}})}+C|\{v=\Gamma_v\}\cap Q_1|^\frac{1}{d}.
\end{align*}
For $\varepsilon_0$ small enough, we get
\begin{align*}
    |\{u\leq M\}\cap Q_1|^\frac{1}{d}\geq C
\end{align*}
as intended.

\end{proof}
This result is identical to \cite[Lemma 4.5]{Caffarelli-Cabre_1995} and therefore the whole argument in Chapter 4 therein can be repeated. In particular, the proof of Harnack inequality, interior  H\"older continuity and uniform continuity up to the boundary follow identically. We highlight these two latter results.

\begin{Proposition}[Interior H\"older continuity]\label{Proposition_holder_continuity}
Suppose \ref{Assumption1}-\ref{Assumption2} are in force and let $u\in S^*(f)$ in $B_1$. Then $u\in C^{\alpha}(B_{1/2})$ and there exists $C>0$ 
\begin{align*}
    \|u\|_{C^\alpha(B_{1/2})}\leq C\left(\|u\|_{L^\infty(B_1)}+\|f\omega^{-1}\|_{L^d(B_1)} \right),
\end{align*}
where $C$ and $\alpha$ depend on $\lambda$, $\Lambda$, $d$ and $\omega$.
\end{Proposition}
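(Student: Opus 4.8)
\textbf{Proof plan for Proposition \ref{Proposition_holder_continuity}.}

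The plan is to reduce everything to the corresponding statement in the uniformly elliptic theory of \cite{Caffarelli-Cabre_1995}, using the weight $\omega$ only through the already-established weighted ABP estimate (Proposition \ref{Proposition_w_weighted_ABP}) and the measure estimate of Lemma \ref{Lemma_first_ite_prime}. The point is that the Hölder continuity in \cite[Chapter 4]{Caffarelli-Cabre_1995} is derived entirely from three ingredients: the ABP maximum principle, a barrier construction, and the resulting decay-of-measure lemma which upgrades to a full $L^\varepsilon$ estimate and then, via the Krylov--Safonov machinery, to Harnack and to oscillation decay. Since Lemma \ref{Lemma_barrier} and Lemma \ref{Lemma_first_ite_prime} reproduce exactly the analogues of the barrier lemma and \cite[Lemma 4.5]{Caffarelli-Cabre_1995}, the remaining steps are purely structural and do not see the weight anymore.

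First I would record the scaling and normalization: given $u \in S^*(f)$ in $B_1$, one normalizes so that $\|u\|_{L^\infty(B_1)} \le 1$ and $\|f\omega^{-1}\|_{L^d(B_1)} \le \varepsilon_0$ by replacing $u$ with $u/(\|u\|_{L^\infty(B_1)} + \varepsilon_0^{-1}\|f\omega^{-1}\|_{L^d(B_1)})$; note that the class $S^*(f)$ is stable under this rescaling precisely because the weight $\omega$ multiplies the Pucci operator linearly, so $\omega \mathcal{M}^{\pm}(D^2(tu)) = t\,\omega\mathcal{M}^{\pm}(D^2 u)$ and the right-hand side scales the same way. Next, from Lemma \ref{Lemma_first_ite_prime} one runs the standard Calderón--Zygmund cube decomposition argument (as in \cite[Lemma 4.6, Lemma 4.7]{Caffarelli-Cabre_1995}) to get $|\{u > M^k\} \cap Q_1| \le (1-\mu)^k$ for nonnegative supersolutions, hence the power decay $|\{u > t\}\cap Q_1| \le d_0 t^{-\varepsilon}$ and the $L^\varepsilon$ estimate. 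Applying this to both $u$ and suitable affine modifications, one obtains the Harnack inequality for $u \in S^*(f)$, and then the oscillation-decay iteration $\osc_{B_r} u \le C r^\alpha(\|u\|_{L^\infty(B_1)} + \|f\omega^{-1}\|_{L^d(B_1)})$ by the usual Krylov--Safonov argument, which is what the claimed Campanato-type bound amounts to after undoing the normalization. The dependence of $\alpha$ and $C$ on $\omega$ enters only through $\|\omega\|_{L^\infty}$ in the barrier estimate \eqref{eq3lem8} and through the exponent $\alpha$ in Lemma \ref{Lemma_barrier}, both of which are controlled once $\Gamma$ and $a$ are fixed.

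The only genuine subtlety — and the step I would flag as the main obstacle — is checking that the $L^p$ viscosity framework is preserved at every stage of this iteration: one must verify that the scaled and translated functions appearing in the Calderón--Zygmund argument remain $L^p$ viscosity sub/supersolutions of the corresponding extremal inequalities with $\omega$-weighted right-hand sides, and that the "measurable ingredient" (here $f\omega^{-1} \in L^d$, with $\omega^{-1}$ possibly blowing up along $\Gamma$) does not destroy the $L^d$-integrability under the dyadic rescalings $x \mapsto x_0 + r x$. This is where the hypothesis $f\omega^{-1} \in L^d(\Omega)$ of \ref{Assumption2} is used in an essential way, since $\omega^{-1}$ is not bounded; however, since the $L^d$ norm of $f\omega^{-1}$ is scale-invariant in the critical exponent $d$ and the argument only ever passes to subdomains, the integrability is never lost. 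Once this bookkeeping is in place, the proof is word-for-word that of \cite[Proposition 4.10]{Caffarelli-Cabre_1995}, and I would simply cite Chapter 4 therein for the details rather than reproduce them.
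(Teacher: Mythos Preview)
Your proposal is correct and follows exactly the approach taken in the paper: the paper does not give an independent proof of this proposition but simply observes that Lemma \ref{Lemma_first_ite_prime} is identical to \cite[Lemma 4.5]{Caffarelli-Cabre_1995}, whence the entire Chapter 4 argument (Calder\'on--Zygmond decomposition, $L^\varepsilon$ estimate, Harnack inequality, oscillation decay) carries over verbatim with $f\omega^{-1}$ in place of $f$. Your write-up is more detailed than the paper's, and your remark about the scale-invariance of the $L^d$ norm of $f\omega^{-1}$ is a useful clarification, but the underlying strategy is the same.
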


\begin{Proposition}[Uniform continuity up to the boundary]\label{Proposition_continuity_boundary}
    Let $u\in S(f)$ in $B_1$, $\varphi:=u_{|\partial B_1}$ and $\rho_\varphi(|x-y|)$ be a modulus of continuity of $\varphi$. Assume finally that $K>0$ is a constant such that $\|\varphi\|_{L^\infty(\partial B_1)}\leq K$ and $\|f\omega^{-1}\|_{L^d(B_1)}\leq K$.

    Then there exists a modulus of continuity $\rho_u$ of $u$ in $\overline{B_1}$ which depends only on $d, \lambda, \Lambda, K$ and $\rho_\varphi$.
\end{Proposition}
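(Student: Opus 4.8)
The plan is to transcribe the proof of continuity up to the boundary of \cite[Ch.~4]{Caffarelli-Cabre_1995}, using the weighted Alexandrov--Bakelman--Pucci estimate of Proposition \ref{Proposition_w_weighted_ABP} in place of the classical one; the global modulus is obtained by gluing an interior modulus, coming from (a rescaled version of) Proposition \ref{Proposition_holder_continuity}, to a boundary modulus produced by barriers. First, applying Proposition \ref{Proposition_w_weighted_ABP} to $u-\min_{\partial B_1}\varphi\in\overline{S}(f)$ and to $-u+\max_{\partial B_1}\varphi\in\overline{S}(-f)$ gives the a priori bound $\|u\|_{L^\infty(B_1)}\le C_0K$ with $C_0=C_0(d)$. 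Next, fix $x_0\in\partial B_1$; since $B_1$ satisfies a uniform exterior ball condition, put $y_0:=2x_0$, so that $|x-y_0|\ge 1$ for all $x\in\overline{B_1}$, with equality only at $x_0$, and set
\[
\Psi(x):=1-|x-y_0|^{-\gamma},\qquad \gamma:=\max\{\,1,\ \Lambda(d-1)/\lambda\,\}.
\]
Computing the eigenvalues of $D^2\Psi$ exactly as in Lemma \ref{Lemma_barrier}, one checks that $\Psi\in C^\infty(\overline{B_1})$, $\Psi(x_0)=0$, $0\le\Psi\le 1$, $\Psi(x)\le\gamma|x-x_0|$, and $\mathcal M^+(D^2\Psi)\le 0$, hence $\omega\,\mathcal M^+(D^2\Psi)\le 0$ on $\overline{B_1}$. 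For $\rho\in(0,1)$ put $\psi_\rho:=\inf\{\Psi(x):x\in\overline{B_1},\ |x-x_0|\ge\rho\}$; then $\psi_\rho>0$, and $\rho\mapsto\psi_\rho$ is continuous, nondecreasing, and tends to $0$ as $\rho\to0^+$.

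\textbf{Localized comparison and boundary modulus.} Fix $\rho\in(0,1)$, set $A:=2(1+C_0)K/\psi_\rho$, and define $w^\pm(x):=\varphi(x_0)\pm\rho_\varphi(\rho)\pm A\,\Psi(x)$. On $\partial(B_1\cap B_\rho(x_0))$ one has $w^-\le u\le w^+$: on the part in $\partial B_1$ this follows from $\Psi\ge 0$ and the modulus $\rho_\varphi$ of $\varphi$, and on $B_1\cap\partial B_\rho(x_0)$ from $A\psi_\rho\ge 2(1+C_0)K\ge\|u\|_{L^\infty(B_1)}+K$. Since $w^+\in C^2$ with $\omega\mathcal M^+(D^2w^+)\le 0$ and $u\in\underline{S}(f)$, stability of the extremal inequalities under $C^2$ perturbations (cf.\ \cite{Caffarelli-Crandall-Kocan-Swiech_1996}) gives $w^+-u\in\overline{S}(|f|)$ in $B_1\cap B_\rho(x_0)$, and likewise $u-w^-\in\overline{S}(|f|)$ using $u\in\overline{S}(f)$. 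Applying Proposition \ref{Proposition_w_weighted_ABP} on the bounded domain $B_1\cap B_\rho(x_0)$, of diameter at most $2\rho$ (the estimate holds verbatim on bounded domains, with the diameter replacing the radius), we get that both $(w^+-u)^-$ and $(u-w^-)^-$ are $\le C(d)\,\rho\,\|f\omega^{-1}\|_{L^d(B_1)}\le C(d)K\rho$ in $B_1\cap B_\rho(x_0)$. Hence, using $\Psi(x)\le\gamma|x-x_0|$, for every $x\in\overline{B_1}$ with $|x-x_0|\le\delta(\rho):=\rho/(A\gamma)$,
\[
|u(x)-u(x_0)|\ \le\ \rho_\varphi(\rho)+A\,\Psi(x)+C(d)K\rho\ \le\ \rho_\varphi(\rho)+\big(1+C(d)K\big)\rho .
\]
The map $\rho\mapsto\delta(\rho)=\rho\psi_\rho/\big(2(1+C_0)K\gamma\big)$ is continuous, strictly increasing, and tends to $0$ as $\rho\to0^+$; inverting it turns the last display into a modulus of continuity $\rho_\partial$ of $u$ at each point of $\partial B_1$, depending only on $d,\lambda,\Lambda,K$ and $\rho_\varphi$.

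\textbf{Gluing and main obstacle.} One now combines the interior and boundary moduli in the usual way: for $x,y\in\overline{B_1}$ with $t:=|x-y|$ small, if $\dist(x,\partial B_1)$ and $\dist(y,\partial B_1)$ are both at least $\tfrac12\sqrt t$, the rescaled interior estimate of Proposition \ref{Proposition_holder_continuity} on a ball $B_{c\sqrt t}(x)\subset B_1$ bounds $|u(x)-u(y)|$ by $C\big(\|u\|_{L^\infty(B_1)}+K\big)t^{\alpha/2}$; otherwise both points lie within $\sqrt t$ of $\partial B_1$, and routing through their nearest boundary points (at mutual distance $\le 3\sqrt t$) bounds $|u(x)-u(y)|$ by $2\rho_\partial(\sqrt t)+\rho_\varphi(3\sqrt t)$. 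Taking the maximum of these two bounds, together with $\|u\|_{L^\infty(B_1)}\le C_0K$, yields the desired modulus $\rho_u$, depending only on $d,\lambda,\Lambda,K$ and $\rho_\varphi$. The only genuine departure from the uniformly elliptic case — and the point requiring care — is that $f\omega^{-1}$ is merely $L^d$ (not bounded): a barrier comparison performed on all of $B_1$ would leave an uncontrolled term $C\|f\omega^{-1}\|_{L^d(B_1)}$, and it is precisely the localization to the small half-ball $B_1\cap B_\rho(x_0)$ — which converts it into $C\rho\,\|f\omega^{-1}\|_{L^d(B_1)}\to 0$ as $\rho\to0$ — together with the availability of the weighted ABP estimate on bounded domains, that makes the argument go through. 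The remaining effort is routine bookkeeping in the gluing step and in the rescaling of the interior estimate.
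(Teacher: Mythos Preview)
Your proposal is correct and follows precisely the route the paper indicates: the paper does not write out a proof of this proposition, but simply states that once Lemma~\ref{Lemma_first_ite_prime} (the analogue of \cite[Lemma~4.5]{Caffarelli-Cabre_1995}) is in hand, ``the whole argument in Chapter~4 therein can be repeated,'' and boundary continuity follows identically. Your barrier-plus-weighted-ABP argument is exactly that Chapter~4 argument, executed with Proposition~\ref{Proposition_w_weighted_ABP} in place of the classical ABP, so the approaches coincide.
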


We conclude this section with an argument to flatten the smooth surface $\Gamma=\{\omega=0\}$ and a reduction to the unitary ball. For the rest of the paper, we will always assume to be in this simplified case.

\begin{Remark}\label{Rmk:flatenning}
Let $\omega(x)=|x_d-\Psi(x')|^a$ so that
\[
\Gamma=\{\omega(x)=0\}=\{x_d=\Psi(x')\}
\]
with $\Psi\in C^\infty(B_1')$. Assume, without loss of generality, that $\Psi(0)=0$ and $D_{x'}\Psi(0)=0$. Then the normal of $\Gamma$ at $0$ is $e_d$. Since $\mu^{-a} \omega(\mu x)\to |x_d|^a$, we get by \ref{Assumption1} and \ref{Assumption4} that
\begin{align*}
F_\mu(M,x):=\mu^{-a}F(M,\mu x),
\end{align*}
converges locally uniformly, up to a subsequence, to $\omega(x)\overline{F}(M)$ , where $\overline{F}$ is $(\lambda,\Lambda)$-elliptic. From now on, in view of the previous reduction , we assume that $\Omega= B_1$, $\Gamma = \{x_d=0\}$ and thus $\omega(x)=|x_d|^a$. By standard covering and flattening arguments, we can recover the general case.
\end{Remark}

\section{Uniqueness of $L^p$ viscosity solutions}\label{Section_unique}

In this section, we will prove the crucial uniqueness result of $L^p$ viscosity solutions to the following homogeneous Dirichlet problem
\begin{align*}
    \begin{cases}
        |x_d|^aF(D^2u)=0, &\mbox{ in } B_1,\\
        u=\varphi, &\mbox{ on } \partial B_1.
    \end{cases}
\end{align*}

We use Jensen's approximate solutions, noting that the natural attempt to instead consider $ |x_d^2 +\varepsilon^2|^{\frac{a}{2}}F(D^2u_\varepsilon)=0$ combined with stability results produces a solution to our equation but does not provide uniqueness. 

We start by defining Jensen's approximate solutions. Let $u$ be a continuous function in $B_1$ and let $U$ be an open set such that $\overline{U}\subset B_1$. We define, for $\varepsilon>0$, the upper $\varepsilon$-envelope of $u$ with respect to $U$:
\begin{align*}
    u^\varepsilon(x_0)=\sup_{x\in \overline{U}}\left\{u(x)+\varepsilon-\frac{1}{\varepsilon}|x-x_0|^2\right\}, \quad \mbox{ for } x_0\in U.
\end{align*}
Similarly we can define the lower envelope $u_\varepsilon$ using convex paraboloids.

The following properties of $u^\varepsilon$ are proven in \cite[Lemma 5.2]{Caffarelli-Cabre_1995}. 

\begin{Lemma}\label{Lemma_properties_envelope}
    Let $x_0, x_1\in U$. Then
    \begin{enumerate}
        \item $\exists x_0^*\in \overline{U}$ such that $u^\varepsilon(x_0)=u(x_0^*)+\varepsilon-|x_0^*-x_0|^2/\varepsilon$.
        \item $u^\varepsilon(x_0)\geq u(x_0)+\varepsilon$.
        \item $|u^\varepsilon(x_0)-u^\varepsilon(x_1)|\leq (3/\varepsilon)\mbox{diam}(U)|x_0-x_1|$.
        \item $0<\varepsilon<\varepsilon'\implies u^\varepsilon(x_0)\leq u^{\varepsilon'}(x_0)$.
        \item $|x_0^*-x_0|^2\leq \varepsilon \osc_U u$.
        \item $0<u^\varepsilon(x_0)-u(x_0)\leq u(x_0^*)-u(x_0)+\varepsilon$.
    \end{enumerate}
\end{Lemma}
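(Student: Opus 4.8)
The statement to be established is the list of six properties of Jensen's upper $\varepsilon$-envelope $u^\varepsilon$, and since the excerpt states that these are proven in \cite[Lemma 5.2]{Caffarelli-Cabre_1995}, the natural plan is simply to recall the elementary arguments verbatim; no new idea is needed because none of these properties sees the degenerate weight. I would proceed item by item, all of them flowing from the definition $u^\varepsilon(x_0)=\sup_{x\in\overline U}\{u(x)+\varepsilon-|x-x_0|^2/\varepsilon\}$.

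\emph{Items (1) and (2).} Property (1) is compactness: $\overline U$ is compact and, for fixed $x_0$, the map $x\mapsto u(x)+\varepsilon-|x-x_0|^2/\varepsilon$ is continuous (here $u\in C(B_1)$ and $\overline U\subset B_1$), hence attains its supremum at some $x_0^*\in\overline U$. Property (2) follows by evaluating the sup at the competitor $x=x_0$, which gives $u^\varepsilon(x_0)\ge u(x_0)+\varepsilon$.

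\emph{Items (3), (4), (6).} For the Lipschitz bound (3), I would write $u^\varepsilon(x_0)-u^\varepsilon(x_1)\le u(x_0^*)+\varepsilon-|x_0^*-x_0|^2/\varepsilon - \big(u(x_0^*)+\varepsilon-|x_0^*-x_1|^2/\varepsilon\big) = \tfrac1\varepsilon\big(|x_0^*-x_1|^2-|x_0^*-x_0|^2\big)$, then expand and bound $|x_0^*-x_1|^2-|x_0^*-x_0|^2 = \langle 2x_0^*-x_0-x_1, x_1-x_0\rangle$ by $3\,\mathrm{diam}(U)\,|x_0-x_1|$ (using $|2x_0^*-x_0-x_1|\le |x_0^*-x_0|+|x_0^*-x_1|+|x_0-x_1|\le 3\,\mathrm{diam}(U)$); symmetry in $x_0,x_1$ finishes it. Property (4) is monotonicity in $\varepsilon$: since $\varepsilon\mapsto \varepsilon - t/\varepsilon$ is increasing for $t\ge 0$, each competitor value increases with $\varepsilon$, so the sup does too. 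Property (6): the left inequality is (2) rearranged to $u^\varepsilon(x_0)-u(x_0)\ge\varepsilon>0$, and the right inequality uses (1) to write $u^\varepsilon(x_0)-u(x_0)=u(x_0^*)-u(x_0)+\varepsilon-|x_0^*-x_0|^2/\varepsilon\le u(x_0^*)-u(x_0)+\varepsilon$.

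\emph{Item (5), the only mildly substantive step.} Using (1) at $x_0$ and comparing with the competitor $x=x_0$ inside the sup defining $u^\varepsilon(x_0)$ is not quite enough; instead combine (2) with the defining inequality: $u(x_0)+\varepsilon \le u^\varepsilon(x_0)=u(x_0^*)+\varepsilon-|x_0^*-x_0|^2/\varepsilon$, whence $|x_0^*-x_0|^2/\varepsilon\le u(x_0^*)-u(x_0)\le \osc_U u$, i.e. $|x_0^*-x_0|^2\le\varepsilon\,\osc_U u$. I do not anticipate a genuine obstacle here: the entire lemma is a soft, weight-independent fact about sup-convolutions, and the only thing to be careful about is the constant $3$ in (3) and making sure all competitor points stay in $\overline U$ so that $u$ is evaluated where it is defined; the real work of the section (the uniqueness proof) comes afterward, where these approximate solutions are fed into the degenerate equation.
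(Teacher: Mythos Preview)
Your proposal is correct and matches the paper's approach: the paper itself gives no proof and simply cites \cite[Lemma 5.2]{Caffarelli-Cabre_1995}, where exactly these elementary sup-convolution arguments appear. Your item-by-item verification is the standard one (in (3) you even get the sharper constant $2/\varepsilon$ if you drop the superfluous term $|x_0-x_1|$ from your triangle inequality, but the stated $3/\varepsilon$ is of course fine).
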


We will now prove the following.
\begin{Theorem}\label{Theorem_properties_envelope}
    \begin{enumerate}
        \item $u^\varepsilon\in C(U)$ and $u^\varepsilon\downarrow u$ uniformly in $U$ as $\varepsilon\to 0$.
        \item For any $x_0\in U$, there is a concave paraboloid of opening $2/\varepsilon$ that touches $u^\varepsilon$ by below at $x_0$ in $U$. Hence $u^\varepsilon$ is $C^{1,1}$ from below in $U$. In particular, $u^\varepsilon$ is punctually second order differentiable at almost every point of $U$.
        \item Suppose that $u$ is an $L^p$ viscosity solution of $|x_d|^aF(D^2u)=0$ in $B_1$ and that $U_1$ is an open set such that $\overline{U}_1\subset U$. We then have that for $\varepsilon\leq \varepsilon_0$ (where $\varepsilon_0$ depends only on $u, U, U_1$), $u^\varepsilon$ is an $L^p$ viscosity subsolution of $|x_d|^aF(D^2u)=0$ in $U_1$; in particular, $|x_d|^aF(D^2u^\varepsilon(x))\geq 0$ a.e. in $U$.
        \end{enumerate}
\end{Theorem}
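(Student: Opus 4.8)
The plan is to treat the three assertions in order, since each relies on the previous one. For (1), I would first note that $u^\varepsilon$ is a supremum of functions that are uniformly Lipschitz in $x_0$ (with constant $(3/\varepsilon)\mathrm{diam}(U)$ by Lemma \ref{Lemma_properties_envelope}(3)), hence $u^\varepsilon$ is Lipschitz, in particular continuous, on $U$. Monotone convergence $u^\varepsilon\downarrow$ as $\varepsilon\downarrow0$ is Lemma \ref{Lemma_properties_envelope}(4); the inequality $u\le u^\varepsilon\le u+\mathrm{osc}$-type bound of Lemma \ref{Lemma_properties_envelope}(6) together with $|x_0^*-x_0|^2\le\varepsilon\,\mathrm{osc}_U u$ from (5) forces $u^\varepsilon(x_0)-u(x_0)\to0$; uniformity of the convergence follows from Dini's theorem (a decreasing sequence of continuous functions converging pointwise to a continuous limit on a compact set converges uniformly), using the continuity of $u$ and of each $u^\varepsilon$.

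For (2), the point is a standard fact about sup-convolutions: fix $x_0\in U$ and let $x_0^*$ be the point realizing the sup in Lemma \ref{Lemma_properties_envelope}(1). Then for every $x$ near $x_0$,
\[
u^\varepsilon(x)\ \ge\ u(x_0^*)+\varepsilon-\tfrac1\varepsilon|x_0^*-x|^2,
\]
while equality holds at $x=x_0$. The right-hand side is a concave paraboloid of opening $2/\varepsilon$ touching $u^\varepsilon$ from below at $x_0$; this proves $u^\varepsilon$ is $C^{1,1}$ from below (semiconcavity with the stated constant), and hence, since it is also Lipschitz, Alexandrov's theorem applies: $u^\varepsilon$ is twice differentiable (punctually, in the second-order sense) at a.e.\ point of $U$.

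\textbf{The main step is (3).} Here I would adapt the proof of \cite[Lemma 5.3]{Caffarelli-Cabre_1995}, the only genuine modification being the presence of the degenerate weight $|x_d|^a$. The key geometric observation is that $u^\varepsilon(x_0)=u(x_0^*)+\varepsilon-|x_0^*-x_0|^2/\varepsilon$ means that the function $y\mapsto u(y)$ is touched from above at $y=x_0^*$ by the paraboloid $y\mapsto u^\varepsilon(x_0)-\varepsilon+|y-(x_0-x_0^*)-x_0^*|^2/\varepsilon$ — more precisely, if $\varphi$ is any $W^{2,p}_{loc}$ test function touching $u^\varepsilon$ from above at $x_0$, then $\varphi(\cdot+x_0^*-x_0)$ touches $u$ from above at $x_0^*$. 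Since $u$ is an $L^p$ viscosity subsolution of $|x_d|^a F(D^2u)=0$ on $B_1$, we get (for $\varepsilon\le\varepsilon_0$ small enough that $x_0^*$ stays in $B_1$, using Lemma \ref{Lemma_properties_envelope}(5) to control $|x_0^*-x_0|$ uniformly for $x_0\in\overline U_1$)
\[
\esslimsup_{x\to x_0}\ |x_d+ (x_0^*-x_0)_d|^a\, F\bigl(D^2\varphi(x)\bigr)\ \ge\ 0.
\]
To conclude I must replace $|x_d+(x_0^*-x_0)_d|^a$ by $|x_d|^a$; the subtlety is that the translation shifts the vanishing set of the weight, so the weight evaluated along the shifted argument need not vanish where $|x_d|^a$ does. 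I expect this to be the main obstacle, and I would handle it by exploiting that $F$ is \emph{degenerate elliptic} in the sense of \ref{Assumption1}: on the region where $|x_d|$ is bounded below the weight is comparable to a positive constant and the argument is classical; on the region where $x_d$ is small, one uses that the shift $(x_0^*-x_0)_d$ is itself small (it can be made $\le C\sqrt{\varepsilon}$ by (5)) so that $|x_d+(x_0^*-x_0)_d|^a$ and $|x_d|^a$ are comparable up to lower-order terms that are absorbed as $x\to x_0$. Alternatively — and this is cleaner — since the conclusion is pointwise a.e., one argues that at a.e.\ $x_0$ where $u^\varepsilon$ is twice differentiable, picking $\varphi$ to be the second-order Taylor polynomial of $u^\varepsilon$ at $x_0$ reduces the $\esslimsup$ statement to the pointwise inequality $|x_d|^aF(D^2u^\varepsilon(x_0))\ge0$: indeed the touching point $x_0^*$ corresponding to the value $u^\varepsilon(x_0)$ gives $D^2 u^\varepsilon(x_0)\ge D^2\varphi(x_0^*)$ (in the sense of Alexandrov second differentials, which exist a.e.\ on both sides), and monotonicity of $F$ in its first argument together with the fact that at such points either $x_0\in\Gamma$ — where both sides vanish since $u^\varepsilon$ is $C^{1,1}$ from below forces $D^2u^\varepsilon$ finite and the weight is $0$ — or $x_0\notin\Gamma$, where one divides by $|x_d|^a>0$ and uses the $C$ viscosity (equivalently $L^p$ viscosity, the data being continuous) subsolution property of $u$ at the nearby point $x_0^*$ with $(x_0^*)_d\ne0$ as well. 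Packaging these cases gives $|x_d|^a F(D^2u^\varepsilon(x))\ge0$ a.e., and then a routine argument (any $W^{2,p}$ test function touching from above has Hessian $\ge D^2u^\varepsilon$ a.e.\ near the contact point in the appropriate averaged sense) upgrades this to the $L^p$ viscosity subsolution statement on $U_1$.
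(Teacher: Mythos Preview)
Your treatment of (1) and (2) is correct and matches the paper, which simply defers these to \cite[Theorem~5.1]{Caffarelli-Cabre_1995}.

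For (3), you correctly set up the translation argument: if $\varphi\in W^{2,p}$ touches $u^\varepsilon$ from above at $x_0$, then the translate $\psi(x)=\varphi(x+x_0-x_0^*)+\varepsilon^{-1}|x_0-x_0^*|^2-\varepsilon$ touches $u$ from above at $x_0^*$, and the $L^p$ subsolution property of $u$ gives
\[
\esslimsup_{y\to x_0}\,\bigl|(y-x_0+x_0^*)_d\bigr|^a\,F\bigl(D^2\varphi(y)\bigr)\ \ge\ 0.
\]
This is exactly what the paper does. Where you diverge---and where there is a genuine gap---is in passing from the shifted weight to $|y_d|^a$. Your approach (a) does not work: the shift $(x_0^*-x_0)_d$ is \emph{fixed} once $\varepsilon$ and $x_0$ are, so nothing is ``absorbed as $x\to x_0$''; the two weights vanish on different hyperplanes and are simply not comparable near those zero sets. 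Your approach (b) is confused: taking $\varphi$ to be the Taylor polynomial of $u^\varepsilon$ still requires translating to $x_0^*$ in order to invoke the subsolution property of $u$, so the shifted weight reappears; the inequality ``$D^2u^\varepsilon(x_0)\ge D^2\varphi(x_0^*)$'' has no content when $\varphi$ is that polynomial (both sides are $D^2u^\varepsilon(x_0)$); and the final ``routine upgrade'' from an a.e.\ pointwise inequality to the $L^p$ viscosity subsolution property is not routine and is in fact the wrong direction---one usually deduces the a.e.\ inequality \emph{from} the viscosity property, not conversely.

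The paper's resolution is simpler than either of your attempts and avoids the comparison of weights entirely: it \emph{strips off} the weight. Since the shifted weight $|(y-x_0+x_0^*)_d|^a$ is nonnegative and positive almost everywhere, the displayed inequality forces
\[
\esslimsup_{y\to x_0}\,F\bigl(D^2\varphi(y)\bigr)\ \ge\ 0,
\]
and then multiplying by the nonnegative factor $|y_d|^a$ yields $\esslimsup_{y\to x_0}|y_d|^aF(D^2\varphi(y))\ge 0$ directly. The point you missed is that one should pass through the \emph{unweighted} inequality as an intermediate step rather than try to compare two different degenerate weights.
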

\begin{proof}
    The proof of \textit{(1)} and \textit{(2)} can be found in \cite[Theorem 5.1]{Caffarelli-Cabre_1995}. We will prove \textit{(3)}.

    Let $x_0\in U_1$. We want to prove that if $\varphi\in W^{2,p}$ touches $u^\varepsilon$ by above at $x_0$, then
    \begin{align}\label{eq_env_supsol}
        \esslimsup_{x\to x_0}\left(|x_d|^aF(D^2\varphi(x))\right)\geq 0.
    \end{align}
Consider
    \begin{align*}
        \psi(x)=\varphi(x+x_0-x_0^*)+\frac{1}{\varepsilon}|x_0-x_0^*|^2-\varepsilon.
    \end{align*}
    
    By property \textit{(5)} of Lemma \ref{Lemma_properties_envelope}, we can pick $\varepsilon_0$ so small that for every $\varepsilon\leq \varepsilon_0$ we have $x_0\in U_1$ implies $x_0^*\in U$. Hence
\begin{align*}
    u(x)\leq u^\varepsilon(x+x_0-x_0^*)+\frac{1}{\varepsilon}|x_0-x_0^*|^2-\varepsilon.
\end{align*}
Therefore, again for $x$ close to $x_0^*$,
\begin{align*}
    u(x)\leq\,& \varphi(x+x_0-x_0^*)+\frac{1}{\varepsilon}|x_0-x_0^*|^2-\varepsilon\\
    =\,&\psi(x)
\end{align*}
and $u(x_0^*)=\psi(x_0^*)$, since $\varphi(x_0)=u^\varepsilon(x_0)$. Hence $\psi$ touches $u$ by above at $x_0^*$ and therefore
\begin{align*}
    0\leq&\, \esslimsup_{x\to x_0^*}\left(|x_d|^aF(D^2\psi(x))\right)\\
    =&\,\esslimsup_{y\to x_0}\left(|(y-x_0+x_0^*)_d|^aF(D^2\varphi(y))\right).
\end{align*}
Therefore, this implies that
\begin{align*}
\esslimsup_{y\to x_0}\left(F(D^2\varphi(y))\right)\geq 0
\end{align*}
from which \eqref{eq_env_supsol} follows immediately.
    
\end{proof}

The following stability result will be used several times.

\begin{Proposition}\label{Proposition_stability}
    Let $F$ be a $(\lambda,\Lambda)$-elliptic operator and $\{u_k\}_k$ be $L^p$ viscosity supersolutions of $|x_d|^aF(D^2u_k)= 0 $ in $B_1$. Assume also that $u_k$ converges locally uniformly to $u$. Then $u$ is an $L^p$ viscosity supersolution to
    \begin{align*}
        |x_d|^aF(D^2u)\geq 0, \quad\mbox{ in } B_1.
    \end{align*}
\end{Proposition}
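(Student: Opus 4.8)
The statement is a standard stability result for $L^p$ viscosity supersolutions, adapted to the degenerate operator $|x_d|^a F(D^2 u)$. The classical reference is \cite[Theorem 3.8]{Caffarelli-Crandall-Kocan-Swiech_1996}, and the plan is to follow that argument, checking that the degenerate weight $\omega(x)=|x_d|^a$ does not cause trouble because it is a fixed, continuous, nonnegative function. Concretely, one wants to show: given $\varphi\in W^{2,p}_{\mathrm{loc}}(B_1)$ such that $u-\varphi$ has a \emph{strict} local minimum at some point $x_0\in B_1$, one has $\essliminf_{x\to x_0}\bigl(|x_d|^a F(D^2\varphi(x))\bigr)\le 0$. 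Reduction to strict minima is harmless: replace $\varphi$ by $\varphi(x)-|x-x_0|^4$, which lies in the same Sobolev class, has the same Hessian at a.e. point infinitesimally near $x_0$ in the relevant $\essliminf$ (the correction term has vanishing Hessian at $x_0$ and is continuous), and turns the minimum into a strict one.

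\textbf{Key steps.} First, I would argue by contradiction: suppose that at a strict local minimum point $x_0$ of $u-\varphi$ we have
\[
\essliminf_{x\to x_0}\bigl(|x_d|^a F(D^2\varphi(x))\bigr) = \theta > 0,
\]
so there is a small ball $B_r(x_0)\subset B_1$ on which $|x_d|^a F(D^2\varphi(x)) \ge \theta/2$ for a.e.\ $x$. Second, since $u_k\to u$ locally uniformly and $x_0$ is a strict minimum of $u-\varphi$, standard perturbation: for each $k$ large, $u_k-\varphi$ attains its minimum over $\overline{B_r(x_0)}$ at an interior point $x_k$, and $x_k\to x_0$, $u_k(x_k)-\varphi(x_k)\to u(x_0)-\varphi(x_0)$. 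Third, apply the $L^p$ viscosity supersolution property of $u_k$ with test function $\varphi$ at the point $x_k$: this gives $\essliminf_{x\to x_k}\bigl(|x_d|^a F(D^2\varphi(x))\bigr)\le 0$. Fourth, derive the contradiction: since $x_k\in B_r(x_0)$ for $k$ large and every ball around $x_k$ of small enough radius is contained in $B_r(x_0)$, we would have $|x_d|^a F(D^2\varphi(x))\ge\theta/2$ a.e.\ near $x_k$, hence the $\essliminf$ at $x_k$ is $\ge\theta/2>0$, contradicting the previous inequality. This closes the argument.

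\textbf{Main obstacle.} The only genuinely delicate point — and the reason the paper isolates this as a proposition rather than citing it verbatim — is the interplay between the $\essliminf$/$\esslimsup$ formulation and the weight $\omega(x)=|x_d|^a$, which \emph{vanishes} on $\Gamma=\{x_d=0\}$. When $x_0\in\Gamma$, the product $|x_d|^a F(D^2\varphi(x))$ can be small near $x_0$ simply because the weight is small, so one must be careful that the contradiction hypothesis $\theta>0$ is genuinely available and that the localization in Step 4 respects the weight. The resolution is precisely that $\omega$ is a \emph{fixed} continuous function independent of $k$: the inequality $|x_d|^a F(D^2\varphi(x))\ge\theta/2$ a.e.\ on $B_r(x_0)$ is a statement about a fixed measurable function, and it is inherited on any sub-ball $B_{r'}(x_k)\subset B_r(x_0)$ regardless of how $\omega$ behaves. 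Thus no new difficulty arises from the degeneracy beyond bookkeeping, and the proof of \cite[Theorem 3.8]{Caffarelli-Crandall-Kocan-Swiech_1996} carries over essentially unchanged; I would write it out in the contradiction form above for the reader's convenience.
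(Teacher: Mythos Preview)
Your proposal is correct and follows the standard stability argument from \cite{Caffarelli-Crandall-Kocan-Swiech_1996}. The paper's own proof takes a slightly different tactical route, so a brief comparison is warranted.

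You argue by contradiction: strictify the test function via a quartic correction $-|x-x_0|^4$, localize the a.e.\ lower bound $|x_d|^a F(D^2\varphi)\ge\theta/2$ on a fixed ball, then find interior minima $x_k$ of $u_k-\varphi$ and derive a contradiction from the supersolution property at $x_k$. The paper instead argues directly: it perturbs the test function by a \emph{quadratic} term $\tfrac{1}{k}\tfrac{|x-x_0|^2}{2}$ (plus a vertical shift), which both forces a touching point $x_k$ for $u_k$ and introduces a Hessian error $\tfrac{1}{k}I$; the uniform ellipticity of $F$ then absorbs this error as $\Lambda|x_d|^a/k$, and letting $k\to\infty$ with $x_k\to x_0$ gives the conclusion without contradiction. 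Your route uses only the definition of $L^p$ viscosity solution and the measurability of $x\mapsto |x_d|^a F(D^2\varphi(x))$, so it would work verbatim for any fixed continuous weight without invoking ellipticity; the paper's route is a line or two shorter but leans on the ellipticity of $F$ to handle the perturbed Hessian. Both are standard and equally valid here.
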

\begin{proof}
    Let $\varphi\in W^{2,p}$ touch $u$ from above at $x_0$, $B_r(x_0)\subset B_1$, and $k_0\geq 1$. Then
\begin{align*}
    \varphi_k(x):=\varphi(x)+\frac{1}{k}\frac{|x-x_0|^2}{2}+C_k
\end{align*}
touches $u_k$ from above at some point $x_k\in B_r(x_0)$, for some $k\geq k_0$. Hence
\begin{align*}
    \esslimsup_{x\to x_k}\left(|x_d|^aF\left(D^2\varphi(x)+\frac{1}{k} I\right)\right)\geq 0,
\end{align*}
which implies, by uniform ellipticity of $F$,
\begin{align*}
    \esslimsup_{x\to x_k}\left(\Lambda |x_d|^a\frac{1}{k}+|x_d|^aF(D^2\varphi(x))\right)\geq 0.
\end{align*}
Letting $k\to \infty$ we get $x_k\to x_0$, and thus
\begin{align*}
    \esslimsup_{x\to x_0}\left(|x_d|^aF(D^2\varphi(x))\right)\geq 0.
\end{align*}
as intended.
\end{proof}

\begin{Remark}\label{Remark_closed_S}
    In particular, this implies that the classes $\overline{S}, \underline{S}$ and $S$ are closed under uniform limits in compact sets.
\end{Remark}

\begin{Theorem}\label{Theorem_uniqueness}
    Let $u$ be an $L^p$ viscosity subsolution of $|x_d|^aF(D^2u)=0$ and $v$ be an $L^p$ viscosity supersolution of $|x_d|^aF(D^2v)=0$ in $B_1$. Then
    \begin{align*}
        u-v\in \underline{S}, \mbox{ in } B_1.
    \end{align*}
\end{Theorem}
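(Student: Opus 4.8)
The goal is to show that if $u$ is an $L^p$ viscosity subsolution and $v$ an $L^p$ viscosity supersolution of $|x_d|^aF(D^2\cdot)=0$, then $w:=u-v$ belongs to $\underline{S}$, i.e. $|x_d|^a\mathcal{M}^+(D^2w)\ge 0$ in the $L^p$ viscosity sense. The usual proof of the analogous statement in the uniformly elliptic case (see \cite[Theorem 5.3]{Caffarelli-Cabre_1995}) proceeds by regularizing $u$ and $v$ via Jensen's sup- and inf-envelopes: replace $u$ by $u^\varepsilon$ and $v$ by $v_\varepsilon$, which by Theorem \ref{Theorem_properties_envelope} (and its analogue for the lower envelope) are, respectively, an $L^p$ viscosity subsolution and supersolution on a slightly smaller domain, and are moreover punctually twice differentiable a.e. At such a point the viscosity inequalities become pointwise inequalities for $D^2u^\varepsilon$ and $D^2v_\varepsilon$, and subtracting them and using the definition of the Pucci operator $\mathcal M^+$ gives $|x_d|^a\mathcal M^+(D^2u^\varepsilon - D^2v_\varepsilon)\ge 0$ pointwise a.e. Then one lets $\varepsilon\to 0$ and uses the stability result (Proposition \ref{Proposition_stability} / Remark \ref{Remark_closed_S}) to pass the inequality to the limit $w=u-v$.

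Concretely, the plan is: (i) fix $U_1\ssubset U\ssubset B_1$ and $\varepsilon\le\varepsilon_0$ so that Theorem \ref{Theorem_properties_envelope}(3) applies to $u^\varepsilon$ on $U_1$ and the symmetric statement applies to $v_\varepsilon$; (ii) at a.e.\ point $x\in U_1$ both $u^\varepsilon$ and $v_\varepsilon$ are punctually second-order differentiable, and the sub/supersolution conditions yield $|x_d|^aF(D^2u^\varepsilon(x))\ge 0$ and $|x_d|^aF(D^2v_\varepsilon(x))\le 0$; (iii) subtract, writing $F(A)-F(B)\le \mathcal M^+(A-B)$ (recall $F$ is $(\lambda,\Lambda)$-elliptic with $F(0)=0$ after the reduction, or use $F(A)-F(B)\le\mathcal M^+(A-B)$ directly), to get, at a.e.\ $x\in U_1$,
\[
|x_d|^a\,\mathcal M^+\big(D^2u^\varepsilon(x)-D^2v_\varepsilon(x)\big)\ \ge\ |x_d|^a\big(F(D^2u^\varepsilon(x))-F(D^2v_\varepsilon(x))\big)\ \ge\ 0;
\]
(iv) conclude that $u^\varepsilon - v_\varepsilon\in\underline S(0)$ in $U_1$, first as a statement for functions in $W^{2,p}_{loc}$ — here I would invoke the standard fact that an a.e.\ pointwise subsolution inequality for a punctually-twice-differentiable function which is $C^{1,1}$ from the appropriate side is equivalent to the $L^p$ viscosity inequality, exactly as in \cite{Caffarelli-Cabre_1995}; (v) let $\varepsilon\to0$: by Theorem \ref{Theorem_properties_envelope}(1) and its lower-envelope analogue, $u^\varepsilon\downarrow u$ and $v_\varepsilon\uparrow v$ uniformly on $U_1$, so $u^\varepsilon-v_\varepsilon\to u-v$ uniformly, and by Remark \ref{Remark_closed_S} (closedness of $\underline S$ under local uniform limits, which follows from Proposition \ref{Proposition_stability} applied to the Pucci operator $\mathcal M^+$) we obtain $u-v\in\underline S$ in $U_1$; (vi) since $U_1\ssubset B_1$ was arbitrary, $u-v\in\underline S$ in $B_1$.

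\textbf{Main obstacle.} The delicate point is step (iv)–(v): one must be careful that the envelopes $u^\varepsilon$ and $v_\varepsilon$ are only semiconvex/semiconcave (one-sided $C^{1,1}$), so the pointwise a.e.\ inequality for $D^2(u^\varepsilon-v_\varepsilon)$ does not immediately upgrade to an $L^p$ viscosity inequality for the difference, which is merely a difference of a semiconcave and a semiconvex function; the clean way around this is to keep track of the one-sided $C^{1,1}$ bounds (opening $2/\varepsilon$) from Theorem \ref{Theorem_properties_envelope}(2), which let one touch $u^\varepsilon-v_\varepsilon$ from above by $W^{2,p}$ test functions and reduce the viscosity inequality to the pointwise one at points of twice-differentiability, just as in the uniformly elliptic theory. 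A secondary technical point is that the weight $|x_d|^a$ is merely continuous and vanishes on $\{x_d=0\}$, so when taking $\esslimsup_{x\to x_0}$ one should check that multiplying the (bounded near $x_0$, or possibly vanishing) weight through the inequalities is harmless — this is exactly the kind of manipulation already carried out in the proofs of Theorem \ref{Theorem_properties_envelope}(3) and Proposition \ref{Proposition_stability}, so it causes no real difficulty here. Apart from these points, every ingredient is a direct transcription of the Caffarelli–Cabré argument with $F$ replaced by $|x_d|^aF$.
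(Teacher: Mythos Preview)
Your proposal is correct and follows essentially the same route as the paper: pass to the sup/inf-envelopes $u^\varepsilon,v_\varepsilon$, show $u^\varepsilon-v_\varepsilon\in\underline S$ on a slightly smaller set, and then invoke stability (Remark~\ref{Remark_closed_S}) to conclude. The paper implements your step~(iv) precisely via the ABP/contact-set trick you anticipate in your ``Main obstacle'' paragraph --- given a $W^{2,p}$ test function $\varphi$ touching $u^\varepsilon-v_\varepsilon$ from above at $x_0$, it applies the ABP estimate to $\psi:=v_\varepsilon-u^\varepsilon+\varphi+\delta|x-x_0|^2-\delta r^2$ to locate a contact set of positive measure on which the pointwise second-order inequalities for $u^\varepsilon$ and $v_\varepsilon$ can be chained through $F$ to yield $|x_d|^a\mathcal M^+(D^2\varphi)\ge -2\Lambda\delta|x_d|^a$ --- so your plan and the paper's execution coincide.
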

\begin{proof}

    Fix $U$ and $U_1$ such that $\overline{U}_1\subset U\subset \overline{U}\subset B_1$; we will prove that for $\varepsilon>0$ small enough $u^\varepsilon-v_\varepsilon\in \underline{S}$ in $U_1$. It follows that $u-v\in \underline{S}$ in $B_1$ since $U_1$ was arbitrary, $u^\varepsilon-v_\varepsilon$ converges uniformly to $u-v$ and $\underline{S}$ is closed, see Remark \ref{Remark_closed_S}.

    To see that $u^\varepsilon-v_\varepsilon\in \underline{S}$ in $U_1$, let $\varphi\in W^{2,p}$ touch $u^\varepsilon-v_\varepsilon$ from above at $x_0$ in $B_r(x_0)\subset U_1$. It suffices to show that 
    \begin{align*}
        \esslimsup_{x\to x_0} |x_d|^a\mathcal{M}^+(D^2\varphi(x))\geq 0.
    \end{align*}
    Take $r>0$ so small that $\overline{B}_{2r}(x_0)\subset U$, $\delta>0$ and define
    \begin{align*}
        \psi(x)=v_\varepsilon(x)-u^\varepsilon(x)+\varphi(x)+\delta|x-x_0|^2-\delta r^2.
    \end{align*}
    We have that $\psi\geq 0$ in $\partial B_r(x_0)$ and $\psi(x_0)<0$. Using \textit{(2)} in Theorem \ref{Theorem_properties_envelope}, we know that $\psi\in W^{2,p}$ and therefore it is an $L^p$ strong supersolution of the uniformly elliptic equation
    \begin{align*}
    \mathcal{M}^+(D^2\psi(x))\leq d\Lambda |D^2\psi(x)|\in L^p.
    \end{align*}
    Therefore we can use the ABP obtained in \cite[Proposition 3.3]{Caffarelli-Crandall-Kocan-Swiech_1996} together with the fact that $L^p$ strong solutions are also $L^p$ viscosity solutions to get
    \begin{align}\label{Equation_unique1}
        0< \sup_{B_r(x_0)} \psi^-\leq C\left(\int_{B_r\cap \{\psi=\Gamma_\psi\}}|D^2\psi|^d \, dx\right)^\frac{1}{d}.
    \end{align}

    By \textit{(2)} in Theorem \ref{Theorem_properties_envelope} we know that there exists a set $A$ such that $|B_r(x_0)\setminus A|=0$ and $u^\varepsilon$ and $v_\varepsilon$ (and hence $\psi$) are punctually second order differentiable in $A$. By \textit{(3)} in Theorem \ref{Theorem_properties_envelope},
    \begin{align}\label{Equation_unique2}
        |x_d|^aF(D^2v_\varepsilon(x))\leq 0 \mbox{ and } |x_d|^aF(D^2u^\varepsilon(x))\geq 0,\mbox{ for } x\in A\cap B_r(x_0).
    \end{align}
    Since $\Gamma_\psi$ is convex and $\Gamma_\psi\leq \psi$, we have that
    \begin{align}\label{Equation_unique3}
        D^2\psi(x) \mbox{ is nonnegative definite, for } x\in A\cap B_r(x_0)\cap \{\psi=\Gamma_\psi\}.
    \end{align}
    It follows from \eqref{Equation_unique1} and $|B_r(x_0)\setminus A|=0$ that, for every $r>0$,
    \begin{align*}
        |\{\psi=\Gamma_\psi\}\cap A\cap B_r(x_0)|>0
    \end{align*}
    and that for every $x^r\in \{\psi=\Gamma_\psi\}\cap A\cap B_r(x_0)$, we have by \eqref{Equation_unique2} and \eqref{Equation_unique3} that
    \begin{align*}
        0\leq & \,|x^r_d|^aF(D^2u^\varepsilon(x^r))\\
        = & \,|x^r_d|^aF(D^2v_\varepsilon(x^r)-D^2\psi(x^r)+D^2\varphi(x^r)+2\delta I)\\
        \leq &\,|x^r_d|^aF(D^2v_\varepsilon(x^r)+D^2\varphi(x^r)+2\delta I)\\
        \leq &\, |x^r_d|^a\left[ F(D^2v_\varepsilon(x^r))+\Lambda \|(D^2\varphi(x^r))^+\|-\lambda\|(D^2\varphi(x^r))^-\|+2\Lambda \delta \right]\\
        \leq &\,|x^r_d|^a\left[ \Lambda \|(D^2\varphi(x^r))^+\|-\lambda\|(D^2\varphi(x^r))^-\|+2\Lambda \delta \right]\\
        \leq&\, |x^r_d|^a\mathcal{M}^+(D^2\varphi(x^r))+2\Lambda \delta |x^r_d|^a.
    \end{align*}
    Letting $\delta \to 0$ we get that
    \begin{align*}
        |x^r_d|^a\mathcal{M}^+(D^2\varphi(x^r))\geq 0
    \end{align*}
	holds for every $x^r\in \{\psi=\Gamma_\psi\}\cap A\cap B_r(x_0)$. Since this set has positive measure we conclude that
	\begin{align*}
        \esslimsup_{x\to x_r}|x_d| ^a \mathcal{M}^+(D^2\varphi(x))\geq 0.
    \end{align*}  
    Letting finally $r\to 0$, $x^r\to x_0$ and we conclude
    \begin{align*}
        \esslimsup_{x\to x_0}|x_d|^a\mathcal{M}^+(D^2\varphi(x))\geq 0,
    \end{align*}
    as intended.

\end{proof}

Combining Theorem \ref{Theorem_uniqueness} with the maximum principle, which follows from Proposition \ref{Proposition_w_weighted_ABP}, we immediately get the following uniqueness result. We emphasize that uniqueness only holds for the class of $L^p$ viscosity solutions, since the maximum principle fails for $C$ viscosity solutions.
\begin{Corollary}\label{Corollary_uniqueness}
    The Dirichlet problem
    \begin{align*}
        \begin{cases}
        |x_d|^aF(D^2u)=0,\quad &\mbox{ in } B_1,\\
        u=\varphi, &\mbox{ on } \partial B_1,
    \end{cases}
    \end{align*}
    has at most one $L^p$ viscosity solution $u\in C(B_1)$.
\end{Corollary}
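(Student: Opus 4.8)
The plan is to deduce Corollary \ref{Corollary_uniqueness} from Theorem \ref{Theorem_uniqueness} together with a maximum principle for the extremal class $\underline{S}$, which itself is a consequence of the weighted ABP estimate in Proposition \ref{Proposition_w_weighted_ABP}. Suppose $u_1, u_2\in C(B_1)$ are two $L^p$ viscosity solutions of $|x_d|^aF(D^2u)=0$ in $B_1$ agreeing with $\varphi$ on $\partial B_1$. Since $u_1$ is in particular an $L^p$ viscosity subsolution and $u_2$ an $L^p$ viscosity supersolution, Theorem \ref{Theorem_uniqueness} gives $w:=u_1-u_2\in \underline{S}(0)$ in $B_1$, i.e. $w$ satisfies $|x_d|^a\mathcal{M}^+(D^2w)\geq 0$ in the $L^p$ viscosity sense with right-hand side $f\equiv 0$.

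Next I would invoke the maximum principle for $\underline{S}(0)$. By the symmetry $\mathcal{M}^-(D^2(-w))=-\mathcal{M}^+(D^2w)$, the function $-w=u_2-u_1$ lies in $\overline{S}(0)$. Applying Proposition \ref{Proposition_w_weighted_ABP} to $-w$, together with the fact that $-w=0$ on $\partial B_1$ and $f\equiv 0$, yields
\[
\sup_{B_1}(-w)^- = \sup_{B_1}(u_1-u_2)^+\leq C(d)\left(\int_{B_1\cap\{-w=\Gamma_{-w}\}}0\,dx\right)^{1/d}=0,
\]
so $u_1\leq u_2$ in $B_1$. Exchanging the roles of $u_1$ and $u_2$ — now using $w=u_1-u_2\in\overline{S}(0)$, again via Theorem \ref{Theorem_uniqueness} applied with $u_2$ as subsolution and $u_1$ as supersolution and the same symmetry — gives $u_2\leq u_1$. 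Hence $u_1\equiv u_2$, which is the claimed uniqueness.

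The only genuinely delicate point is already isolated in Theorem \ref{Theorem_uniqueness}: that the difference of a sub- and a supersolution of the \emph{degenerate} equation belongs to $\underline{S}$, which required the passage through Jensen's approximate solutions precisely because one cannot divide by $|x_d|^a$ across $\{x_d=0\}$ at the level of $C$ viscosity solutions. Given that theorem, the remaining steps are routine: the weighted ABP of Proposition \ref{Proposition_w_weighted_ABP} degenerates to the usual strong maximum principle when $f\equiv0$, and the two-sided comparison argument is standard. I would also remark, as the authors do, that the hypothesis that we are in the class of $L^p$ viscosity solutions is essential here — the one-dimensional example $u=at_++bt_-$ in the introduction shows that $C$ viscosity solutions of $|x_d|^aF(D^2u)=0$ with zero boundary data are far from unique — and that Proposition \ref{Proposition_w_weighted_ABP} is exactly the tool that is unavailable in the $C$ viscosity framework.
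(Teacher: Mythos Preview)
Your proof is correct and follows exactly the approach indicated in the paper: combine Theorem \ref{Theorem_uniqueness} with the maximum principle coming from the weighted ABP estimate in Proposition \ref{Proposition_w_weighted_ABP}, applied in both directions. Your closing remark that the $L^p$ viscosity framework is essential precisely because Proposition \ref{Proposition_w_weighted_ABP} is unavailable for $C$ viscosity solutions is also in line with the paper's own emphasis.
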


\section{Gradient Regularity}\label{Section_C1alpha}

This section concerns with the higher order regularity for the equation 
\begin{align}\label{eq_var_coef}
    \begin{cases}
        F(D^2u,x)=f, &\mbox{ in } B_1,\\
        u=\varphi, &\mbox{ on } \partial B_1,
    \end{cases}
\end{align}
where $\varphi$ has some modulus of continuity.  We obtain $C^{1,\alpha}$ regularity up to and across the degeneracy set $\Gamma=\{x_d=0\}$.

Our argument can be divided in three steps, first we obtain regularity in $\Gamma=B_1':=B_1\cap \{x_d=0\}$, then we obtain interior regularity (away from $\Gamma$)  and finally we get regularity across $\Gamma$. In more detail,
\begin{enumerate}
\item We start by obtaining pointwise $C^{1,\alpha}$ regularity at the points $x_0\in B_{1/2}'$. We do this by proving that in a small ball $B_r(x_0)$, the graph of $u$ is trapped between two functions $\ell(x)\pm C|x-x_0|^{1+\alpha}$ from both sides of $B_r'$, where $\ell$ is an affine function;

\item Then we obtain interior regularity at points away from $B_1'$. This result follows immediately from \cite[Chapter 8.2]{Caffarelli-Cabre_1995} after a rescaling argument.

\item Combining the two sided control of $u$ on $B_1'$, which follows from step (1), and the interior regularity obtained in step (2) we get regularity up to the boundary immediately from \cite[Proposition 2.2]{Milakis-Silvestre_2006}.
\end{enumerate}

 The approach used in this paper is to consider a zooming on the coefficients and use the scaling given by the degeneracy, i.e. considering
\begin{align*}
F_\mu(M,x)=\mu^{-a} F(M,\mu x).
\end{align*}
The observation in Remark \ref{Rmk:flatenning} to proceed with the argument was inspired by  \cite[Assumption (H2)]{LST} and states that $F_\mu(M,x)$ converges to $|x_d|^a\overline{F}(M)$ uniformly in $x$ and locally in $M$, possibly up to a subsequence.
\begin{Remark}

It is noteworthy that the usual approach to treat the case with variable coefficients is to consider a smallness assumption on the oscillation of the form
\begin{align}\label{eq_intro_beta}
\|\beta\|_{L^\infty(B_r)}\leq Cr^\gamma,
\end{align}
where $\beta$ measures the proximity between the operator $F$ and another one for which there is improved regularity. In \cite[Chapter 8]{Caffarelli-Cabre_1995}, for example, $\beta$ measures the proximity between $F(M,x)$ and $F(M,0)$. In our case, however, since the ellipticity vanishes at points where $x_d=0$, we instead measure the proximity to the operator $|x_d|^a\overline{F}(M)$ for which there is optimal $C^{1,\alpha_0}$ regularity, see Lemma \ref{Lemma_division}. Therefore, it seems natural to consider instead
\begin{align*}
\beta(x)=\sup_{M\in \text{Sym}(d)}\frac{|F(M,x)-|x_d|^a\overline{F}(M)|}{1+|M|}.
\end{align*}
And indeed the argument in Remark \ref{Rmk:flatenning} follows from \eqref{eq_intro_beta}. 

\end{Remark}

\bigskip

\subsection{Regularity on $B_1'$}

We start by considering the more challenging case and obtain $C^{1,\alpha}(x_0)$ regularity at points where $(x_0)_ d=0$. We assume, without loss of generality, that $x_0=0$. Remark \ref{Rmk:flatenning} provides a tangential path from equation \eqref{eq_var_coef} to the following one
\begin{align}\label{eq_div_1}
    |x_d|^a\overline{F}(D^2u)=0, \quad \mbox{ in } B_1,
\end{align}
where $\overline{F}$ is uniformly elliptic with constants $\lambda, \Lambda$.

First we obtain interior $C^{1,\alpha_0}$ regularity for \eqref{eq_div_1} by the following Division Lemma, and then we use a perturbation argument to get $C^{1,\alpha}(x_0)$ for \eqref{eq_var_coef}.

\begin{Lemma}[Division Lemma]\label{Lemma_division}
If $u$ is a viscosity solution to \eqref{eq_div_1}, then it is also a viscosity solution to
\begin{align}\label{eq_div_2}
    \overline{F}(D^2u)=0, \quad \mbox{ in } B_1.
\end{align}
\end{Lemma}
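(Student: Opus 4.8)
The claim is a removable--singularity statement: the degeneracy set $\Gamma=\{x_d=0\}$ is a smooth hypersurface (in particular $\mathcal{L}^d$--null), and off $\Gamma$ the equation $|x_d|^a\overline F(D^2u)=0$ is literally equivalent to $\overline F(D^2u)=0$ since $|x_d|^a>0$ there. So the only thing to check is the viscosity inequalities \emph{at} points $x_0\in B_1'$. I would verify the subsolution property (the supersolution case being symmetric), and I would work with $L^p$ viscosity solutions, since that is the framework in which the problem is well posed; because $\overline F$ and the zero right--hand side are continuous, an $L^p$ viscosity solution is automatically a $C$ viscosity solution and vice versa, so it suffices to test against $\varphi\in C^2$ (or $W^{2,p}_{loc}$).

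\textbf{Key steps.} First I would reduce to an interior touching statement: suppose $\varphi\in W^{2,p}_{loc}(B_1)$ touches $u$ from above at $x_0=(x_0',0)\in B_1'$; I must show $\esslimsup_{x\to x_0}\overline F(D^2\varphi(x))\ge 0$. Assume for contradiction that $\overline F(D^2\varphi(x))\le -c<0$ for a.e. $x$ in a neighbourhood $B_\rho(x_0)$. Then for \emph{every} $x\in B_\rho(x_0)$ we also have $|x_d|^a\overline F(D^2\varphi(x))\le -c\,|x_d|^a\le 0$ a.e., and in fact $\esslimsup_{x\to x_0}|x_d|^a\overline F(D^2\varphi(x))\le 0$. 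This would be fine if it were a strict inequality, but it is not, so the naive argument stalls exactly at the degeneracy — this is the main obstacle (see below). The fix is the standard perturbation used for removable sets: modify $\varphi$ near $x_0$ by a small multiple of an auxiliary function $w$ with $\overline F(D^2 w)>0$ away from $\Gamma$ but which is subordinate to $|x_d|^a$ in a way that still lets $\varphi+\eta w$ touch $u$ from above after a vertical shift. A clean choice adapted to the weight $|x_d|^a$ with $0<a<1$ is $w(x) = -|x_d|^{2-a}$ (or $-(x_d^2+\varepsilon^2)^{(2-a)/2}$ to keep things smooth and then let $\varepsilon\to0$): one computes that $|x_d|^a\overline F(D^2 w)$ has a \emph{strictly positive} essential liminf at $x_0$, namely of size $(2-a)(1-a)\lambda>0$ up to the $x'$--directions which are controlled, exactly as in the one--dimensional example $\phi(t)=|t|^{2-\beta}$ in the introduction. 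Since $w$ is continuous, vanishes on $\Gamma$, and $w\le 0$, the function $\psi_\eta:=\varphi+\eta w+$ (a small constant) still touches $u$ from above at a nearby point $x_\eta\to x_0$; applying the $L^p$ supersolution inequality for $|x_d|^a\overline F$ to $\psi_\eta$ at $x_\eta$ and letting $\eta\to0$, $\varepsilon\to0$ produces $\esslimsup_{x\to x_0}|x_d|^a\overline F(D^2\varphi(x))\ge \eta\cdot(\text{strictly positive})>0$ in the limit — wait, more precisely one gets that the contradiction hypothesis $\overline F(D^2\varphi)\le -c$ forces $|x_d|^a\overline F(D^2\psi_\eta)<0$ near $x_\eta$ for $\eta$ small while the supersolution condition demands its $\esslimsup\ge0$, a contradiction. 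Concretely: $|x_d|^a\overline F(D^2\psi_\eta) \le |x_d|^a\overline F(D^2\varphi) + \eta\,\Lambda\,|x_d|^a|D^2 w|$ is \emph{not} good enough; instead use ellipticity in the favorable direction, $|x_d|^a\overline F(D^2\varphi+ \eta D^2 w) \le |x_d|^a\overline F(D^2\varphi) - \eta\lambda |x_d|^a |(D^2 w)^-| + \eta \Lambda |x_d|^a|(D^2w)^+|$, and exploit that for $w=-|x_d|^{2-a}$ the Hessian $D^2 w = -(2-a)(1-a)|x_d|^{-a}\,e_d\otimes e_d$ is \emph{negative} semidefinite, so $(D^2w)^+=0$ and $-\eta\lambda|x_d|^a|(D^2w)^-| = -\eta\lambda(2-a)(1-a)$; hence $|x_d|^a\overline F(D^2\psi_\eta)\le |x_d|^a\overline F(D^2\varphi) - \eta\lambda(2-a)(1-a) \le -\eta\lambda(2-a)(1-a)<0$ uniformly near $x_0$, contradicting the supersolution inequality for $u$ tested against $\psi_\eta$. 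This is exactly the mechanism by which the introduction's counterexample is ruled out in the $L^p$ class.

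\textbf{Main obstacle.} The heart of the matter is that one cannot simply "divide by $|x_d|^a$'' pointwise, because the touching test function $\varphi$ need not have $D^2\varphi$ controlled as $x_d\to0$, and $\esslimsup$ of a product can drop when one factor vanishes on the touching set. The perturbation $w$ with $D^2w\sim |x_d|^{-a}\,e_d\otimes e_d$ is precisely calibrated so that $|x_d|^a\cdot D^2 w$ is a bounded, sign--definite quantity that survives the limit $x\to\Gamma$; its existence uses $a<1$ (so that $2-a>1$ and $w\in C^1$, and $|x_d|^{-a}$ is still integrable against the natural measures). I would also need to double--check that the vertical shift keeping $\psi_\eta$ above $u$ can be taken $\to0$ as $\eta\to0$ (this follows from $u$ being continuous, $w\le0$ bounded on a fixed ball, and $\varphi$ touching $u$ at $x_0$, by a routine compactness/maximum argument exactly as in the proof of Proposition~\ref{Proposition_stability}), and that the a.e. statements are consistent with the $\esslimsup$ formulation of the $L^p$ viscosity definition. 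With that in place, the supersolution inequality for $\overline F$ at $x_0$ follows, the subsolution case is symmetric using $+|x_d|^{2-a}$, and the Lemma is proved. Finally, $C^{1,\alpha_0}$ interior regularity for $u$ then follows by invoking \cite[Corollary 5.7]{Caffarelli-Cabre_1995} for $\overline F(D^2u)=0$.
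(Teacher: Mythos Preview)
Your route is genuinely different from the paper's, and it is worth comparing the two.

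The paper's proof is a three--line application of the uniqueness result already established as Corollary~\ref{Corollary_uniqueness}: given the $L^p$ viscosity solution $u$ of $|x_d|^a\overline F(D^2u)=0$ in $B_1$, one solves the uniformly elliptic Dirichlet problem $\overline F(D^2v)=0$ in $B_1$ with $v=u$ on $\partial B_1$; trivially $v$ also solves the degenerate equation, so by Corollary~\ref{Corollary_uniqueness} one has $u=v$ and the Lemma follows. All the analytic work (Jensen envelopes, ABP) has already been absorbed into Section~\ref{Section_unique}. Your approach is instead a direct, local verification of the viscosity inequalities on $\Gamma$ via the barrier $w=-|x_d|^{2-a}$; the key computation---that $|x_d|^a D^2w$ is a \emph{constant} negative rank--one matrix, so the degeneracy is exactly cancelled and $\psi_\eta=\varphi+\eta w$ becomes a strict supersolution of the degenerate equation---is correct and is precisely the mechanism behind the one--dimensional example in the introduction. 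This is more hands--on and, if it goes through, bypasses the Jensen--envelope machinery.

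There is, however, a gap you have not confronted. For $\psi_\eta$ to be an admissible test function at a touching point on $\Gamma$ in the $L^p$ viscosity sense, you need $|x_d|^{2-a}\in W^{2,p}_{\mathrm{loc}}$, i.e.\ $|x_d|^{-a}\in L^p_{\mathrm{loc}}$, which forces $ap<1$. With the natural exponent $p=d$ used throughout the paper (the ABP in Proposition~\ref{Proposition_w_weighted_ABP}, Assumption~\ref{Assumption2}), this restricts your argument to $a<1/d$, whereas the Lemma is needed for all $a\in(0,1)$. Your proposed smoothing $(x_d^2+\varepsilon^2)^{(2-a)/2}$ does not rescue this: it is $C^\infty$, but $|x_d|^a$ times its Hessian vanishes on $\Gamma$, so if the perturbed touching point remains on $\Gamma$ the strict sign is lost and the $\esslimsup$ inequality gives nothing. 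One can observe that whenever $\max(u-\psi_\eta)>0$ the maximizer is forced off $\Gamma$ (since $u-\psi_\eta=u-\varphi\le 0$ on $\Gamma$ by strictness), and there the test is purely $C^2$ and local; but you still have to rule out, or handle, the scenario in which the maximum stays equal to zero at $x_0$ for all small $\eta$, and you have not done so. The paper's uniqueness--based proof sidesteps this integrability issue entirely and works uniformly in $a\in(0,1)$.
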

\begin{proof}

Let $u$ be a solution to \eqref{eq_div_1} in $B_1$ and $v$ be the unique solution of
\begin{align*}
\begin{cases}
\overline{F}(D^2v)=0, &\mbox{ in } B_{1},\\
v=\varphi,\qquad &\mbox{ on } \partial B_{1}.
\end{cases}
\end{align*}
Then clearly
\begin{align}\label{eq_div_3}
\begin{cases}
|x_d|^a\overline{F}(D^2v)=0, &\mbox{ in } B_{1},\\
v=\varphi,\qquad &\mbox{ on } \partial B_{1}.
\end{cases}
\end{align}
By Corollary \ref{Corollary_uniqueness}, there exists a unique solution to \eqref{eq_div_3} and thus $v=u$ in $B_1$.

\end{proof}

The following Approximation Lemma is instrumental in our study.
\begin{Lemma}[Approximation Lemma]\label{Lemma_approximation}
Let $u\in C(B_1)$ be a solution to
\begin{align*}
F_\mu(D^2u,x)=f.
\end{align*}
Then, for every $\varepsilon>0$ there exists $\delta>0$ such that if $\mu\leq \delta$, $\|u\|_{L^\infty(B_1)}\leq 1$ and $\|f\|_{L^\infty(B_1)}\leq \delta$, there exists $h\in C^{1,\alpha_0}_{loc}(B_{9/10})$ solving
\begin{align*}
\overline{F}(D^2h)=0,\quad \mbox{ in } B_{9/10}
\end{align*} 
and such that
\begin{align*}
\|u-h\|_{L^\infty(B_{1/2})}\leq \varepsilon.
\end{align*}
\end{Lemma}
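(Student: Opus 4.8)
The plan is to argue by contradiction, in the now-standard style of compactness/approximation lemmas. Suppose the statement fails for some $\varepsilon_0>0$. Then there exist sequences $\mu_k\to 0$, operators $F_{\mu_k}$ satisfying \ref{Assumption1} and \ref{Assumption4} (so that, by Remark \ref{Rmk:flatenning}, $F_{\mu_k}(M,x)=\mu_k^{-a}F(M,\mu_k x)$ converges, up to a subsequence, to $|x_d|^a\overline{F}(M)$ locally uniformly in $M$ and uniformly in $x$, with $\overline{F}$ being $(\lambda,\Lambda)$-elliptic), together with functions $u_k\in C(B_1)$ and right-hand sides $f_k$ such that
\begin{align*}
F_{\mu_k}(D^2u_k,x)=f_k \text{ in } B_1, \quad \|u_k\|_{L^\infty(B_1)}\leq 1, \quad \|f_k\|_{L^\infty(B_1)}\leq \tfrac1k,
\end{align*}
but $\|u_k-h\|_{L^\infty(B_{1/2})}>\varepsilon_0$ for every $h\in C^{1,\alpha_0}_{loc}(B_{9/10})$ solving $\overline{F}(D^2h)=0$ in $B_{9/10}$.

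First I would extract a uniform modulus of continuity for $\{u_k\}$ on compact subsets of $B_1$. Since $F_{\mu_k}$ is $(\lambda\,\omega_{\mu_k},\Lambda\,\omega_{\mu_k})$-degenerate-elliptic with $\omega_{\mu_k}(x)=\mu_k^{-a}\omega(\mu_k x)\to |x_d|^a$, each $u_k$ lies in the class $S^*(f_k)$ associated with the weight $\omega_{\mu_k}$ (with $\|f_k\,\omega_{\mu_k}^{-1}\|_{L^d}$ controlled, using that $f_k$ is small in $L^\infty$ and $\omega_{\mu_k}^{-1}\in L^d_{loc}$ uniformly in $k$ since $a<1$). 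Then Proposition \ref{Proposition_holder_continuity} gives $C^\alpha$ estimates on, say, $B_{9/10}$ that are uniform in $k$. By Arzelà–Ascoli and a diagonal argument, a subsequence of $u_k$ converges locally uniformly in $B_1$ to some $u_\infty\in C(B_1)$ with $\|u_\infty\|_{L^\infty}\leq 1$.

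Next I would pass to the limit in the equation. The goal is to show $u_\infty$ is an $L^p$ viscosity solution of $|x_d|^a\overline{F}(D^2u_\infty)=0$ in $B_{9/10}$; this is a stability statement in the spirit of Proposition \ref{Proposition_stability}, but now accounting simultaneously for (i) the convergence $u_k\to u_\infty$, (ii) the convergence of the operators $F_{\mu_k}\to |x_d|^a\overline{F}$, and (iii) $f_k\to 0$. Concretely, given $\varphi\in W^{2,p}_{loc}$ touching $u_\infty$ strictly from above at $x_0\in B_{9/10}$, a perturbed test function $\varphi+\frac1k\frac{|x-x_0|^2}{2}+c_k$ touches $u_k$ from above at nearby points $x_k\to x_0$; writing the subsolution inequality for $u_k$, using the uniform convergence of $F_{\mu_k}$ to $|x_d|^a\overline{F}$ on the relevant compact set of matrices and in $x$, together with $\|f_k\|_\infty\to 0$, and then letting $k\to\infty$, yields the subsolution inequality for $u_\infty$ against $\varphi$; the supersolution inequality is symmetric. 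Then, by the Division Lemma (Lemma \ref{Lemma_division}) — using that all data here is continuous so $L^p$ and $C$ viscosity notions coincide — $u_\infty$ solves $\overline{F}(D^2u_\infty)=0$ in $B_{9/10}$, hence $u_\infty\in C^{1,\alpha_0}_{loc}(B_{9/10})$ by \cite[Corollary 5.7]{Caffarelli-Cabre_1995}. Taking $h=u_\infty$, we have $\|u_k-h\|_{L^\infty(B_{1/2})}\to 0$, contradicting $\|u_k-h\|_{L^\infty(B_{1/2})}>\varepsilon_0$ for all $k$.

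The main obstacle is step (ii)–(iii), the stability of the $L^p$ viscosity notion under the joint limit in which the operators themselves vary: one must ensure the $\esslimsup$/$\essliminf$ conditions survive the limit when $F_{\mu_k}$ changes with $k$, and that the $x_d$-weight (which degenerates precisely on $\{x_d=0\}$, a set the touching point $x_0$ may well lie on) is handled correctly — the argument in Proposition \ref{Proposition_stability} and in part \textit{(3)} of Theorem \ref{Theorem_properties_envelope} shows the weight can be absorbed, but here one additionally needs the locally uniform convergence $F_{\mu_k}\to|x_d|^a\overline F$ guaranteed by Remark \ref{Rmk:flatenning} to control $F_{\mu_k}(D^2\varphi(x),x)-|x_d|^a\overline F(D^2\varphi(x))$ uniformly near $x_0$. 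A secondary technical point is checking that the constant $\delta$ produced is genuinely uniform, i.e. depends only on $d,\lambda,\Lambda$ (and $a$, through the fixed weight), which follows because every estimate invoked — the ABP of Proposition \ref{Proposition_w_weighted_ABP}, the Hölder estimate of Proposition \ref{Proposition_holder_continuity}, and the $C^{1,\alpha_0}$ estimate for $\overline F$ — is universal.
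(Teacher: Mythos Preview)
Your argument is correct and matches the paper's proof essentially step for step: contradiction, compactness via the weighted H\"older estimate of Proposition \ref{Proposition_holder_continuity}, stability to obtain $|x_d|^a\overline{F}(D^2u_\infty)=0$, then the Division Lemma and the universal $C^{1,\alpha_0}$ estimate to reach the contradiction with $h=u_\infty$. You supply more detail than the paper on the stability step with varying operators (the paper simply writes ``arguing as in Proposition \ref{Proposition_stability}''), and your parenthetical about $L^p$ and $C$ viscosity notions coinciding is unnecessary --- the Division Lemma is already stated and proved in the $L^p$ framework --- but the overall structure is identical.
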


\begin{proof}
By contradiction, assume that there exist sequences $(u_n)_n$, $(\mu_n)_n$ and $(f_n)_n$ such that
\begin{align*}
F_{\mu_n}(D^2u_n,x)=f_n
\end{align*}
with $\mu_n\to 0$, $\|u_n\|_{L^\infty(B_1)}\leq 1$ and $f_n\to 0$; however there exists $\varepsilon_0>0$ such that, for every $h\in C_{loc}^{1,\alpha_0}(B_{9/10})$,
\begin{align}\label{eq_approx_1}
\|u_n-h\|_{L^\infty(B_{1/2})}>\varepsilon_0.
\end{align}

By Remark \ref{Rmk:flatenning}, up to a subsequence, $F_{\mu_n}(M,x)\to |x_d|^a\overline{F}(M)$. Since $F_{\mu_n}(M,x)$ has ellipticity  $|x_d|^a\lambda$ and $|x_d|^a\Lambda$, by Proposition \ref{Proposition_holder_continuity}, $u_n\in C_{loc}^\alpha$ with universal estimates. Hence,  by Arzelà-Ascoli, there exists a subsequence such that $u_n\to u_0$ locally uniformly. Arguing as in Proposition \ref{Proposition_stability} we conclude that
\begin{align*}
|x_d|^a\overline{F}(D^2u_0)=0, \quad \mbox{ in } B_{1/2}.
\end{align*}
By Lemma \ref{Lemma_division} we get
\begin{align*}
\overline{F}(D^2u_0)=0, \quad \mbox{ in } B_{1/2}.
\end{align*}
By \cite[Corollary 5.7]{Caffarelli-Cabre_1995}, there exists a universal $\alpha_0$ depending only on $d,\lambda,\Lambda$ such that $u_0\in C^{1,\alpha_0}_{loc}(B_{9/10})$ with universal estimates. Choosing $h=u_0$ in \eqref{eq_approx_1} we get a contradiction, which concludes the proof.

\end{proof}

We now check that we can assume, without loss of generality, that the smallness regime considered in Lemma \ref{Lemma_approximation} holds. For this purpose, let $\delta>0$ to be chosen later. Define the rescaling
\begin{align*}
\tilde{u}(y)=\frac{r_1^{-2}u(r_1y)}{K}
\end{align*}
with
\begin{align*}
K:=r_1^{-2}\|u\|_{L^\infty(B_1)}+\delta^{-1}r_1^{-a}\|f\|_{L^\infty(B_1)}.
\end{align*}
Then $\tilde{u}$ solves
\begin{align*}
\frac{1}{r_1^a K}F(KD^2\tilde{u},r_1y)=\frac{f(r_1y)}{Kr_1^a}
\end{align*}
which we rewrite as
\begin{align*}
\tilde{F}_{r_1}(D^2\tilde{u},y)=\tilde{f}(y).
\end{align*}
Choosing $r_1\leq \delta$, we see that we are in the smallness regime of Lemma \ref{Lemma_approximation}.

We now use the characterization of H\"older spaces to obtain the first instance of a geometric iteration.

\begin{Lemma}\label{Lemma_first_it1}
Let $u\in C(B_1)$ be a solution to \eqref{eq_var_coef}. Then there exist $0<\rho\ll1$ and an affine function
\begin{align*}
\ell(x)=a+b\cdot x
\end{align*}
with $a\in \Rr$ and $b\in \Rr^d$ such that
\begin{align*}
\|u-\ell\|_{L^\infty(B_\rho)}\leq \rho^{1+\alpha}
\end{align*}
where $\alpha$ is any positive number less than $\alpha_0$ and $\rho$ depends only on universal constants and $\alpha$.
\end{Lemma}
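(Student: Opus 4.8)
The plan is to run a standard Caffarelli-type perturbation/approximation scheme, using Lemma \ref{Lemma_approximation} to extract a $C^{1,\alpha_0}$ harmonic-type profile and then comparing $u$ to its first-order Taylor polynomial. First I would fix $\alpha\in(0,\alpha_0)$ and note that by the rescaling discussed right before the statement we may assume that $u$ solves $\tilde F_{r_1}(D^2 u,y)=\tilde f(y)$ with $\|u\|_{L^\infty(B_1)}\le 1$ and $\|\tilde f\|_{L^\infty(B_1)}\le\delta$, where $\delta$ is the smallness threshold coming from Lemma \ref{Lemma_approximation}; this reduces everything to the smallness regime. Then, for $\varepsilon>0$ to be chosen, Lemma \ref{Lemma_approximation} produces $h\in C^{1,\alpha_0}_{loc}(B_{9/10})$ with $\overline F(D^2 h)=0$ and $\|u-h\|_{L^\infty(B_{1/2})}\le\varepsilon$.

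Next I would use the interior $C^{1,\alpha_0}$ estimate for $h$ from \cite[Corollary 5.7]{Caffarelli-Cabre_1995}: there is an affine function $\ell(x)=h(0)+Dh(0)\cdot x$ with $|h(0)|+|Dh(0)|\le C_0$ (universal, since $\|h\|_{L^\infty(B_{1/2})}\le 1+\varepsilon\le 2$) such that $\|h-\ell\|_{L^\infty(B_\rho)}\le C_0\rho^{1+\alpha_0}$ for all $\rho\le 1/2$. Combining the two bounds gives
\begin{align*}
\|u-\ell\|_{L^\infty(B_\rho)}\le \|u-h\|_{L^\infty(B_{1/2})}+\|h-\ell\|_{L^\infty(B_\rho)}\le \varepsilon+C_0\rho^{1+\alpha_0}.
\end{align*}
Now I would choose the parameters in the correct order: since $\alpha<\alpha_0$, first pick $\rho\ll 1$ small enough (depending only on $C_0$ and $\alpha$, hence universal) that $C_0\rho^{1+\alpha_0}\le \tfrac12\rho^{1+\alpha}$; then pick $\varepsilon=\tfrac12\rho^{1+\alpha}$, which in turn fixes $\delta=\delta(\varepsilon)$ from Lemma \ref{Lemma_approximation} and hence the admissible $r_1\le\delta$. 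With these choices $\|u-\ell\|_{L^\infty(B_\rho)}\le\rho^{1+\alpha}$, which is the claim (after translating the rescaling back, the affine function rescales to an affine function, so the statement as written for the original $u$ follows up to harmless adjustment of constants absorbed into the normalization).

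The only genuinely delicate point is bookkeeping the order of quantifiers — $\alpha$ and $\rho$ must be chosen before $\varepsilon$ and $\delta$, and one must check that $\rho$ ends up depending only on $d,\lambda,\Lambda$ and $\alpha$ and not circularly on $\delta$ or $u$. Everything else is routine: the approximating function $h$ exists by Lemma \ref{Lemma_approximation}, its affine approximation is the content of the cited Caffarelli–Cabré corollary applied to the limiting uniformly elliptic equation $\overline F(D^2 h)=0$ (valid thanks to the Division Lemma \ref{Lemma_division} which removed the weight), and the triangle inequality does the rest. I would also remark that $\rho$ can be taken independent of the base point, so that this lemma is ready to be iterated dyadically in the next step to upgrade the single-scale estimate to the full pointwise $C^{1,\alpha}(0)$ bound.
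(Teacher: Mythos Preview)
Your proposal is correct and follows essentially the same approach as the paper: apply the Approximation Lemma to get $h$, set $\ell(x)=h(0)+Dh(0)\cdot x$, use the triangle inequality to bound $\|u-\ell\|_{L^\infty(B_\rho)}\le \varepsilon+C\rho^{1+\alpha_0}$, and then choose $\rho=(2C)^{-1/(\alpha_0-\alpha)}$ followed by $\varepsilon=\tfrac12\rho^{1+\alpha}$. Your discussion of the order of quantifiers is exactly the point, and your explicit remark that $\rho$ depends only on universal constants and $\alpha$ (not on $\delta$ or $u$) matches the paper's treatment.
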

\begin{proof}
Take $\varepsilon_0>0$ to be fixed. Note that this choice fixes $\delta>0$ via Lemma \ref{Lemma_approximation}, such that if the smallness assumptions are satisfied, then there exists $h\in C^{1,\alpha_0}$ with universal estimates such that
\begin{align*}
\|u-h\|_{L^\infty(B_{1/2})}\leq \varepsilon_0.
\end{align*}
Let 
\begin{align*}
\ell(x)=h(0)+Dh(0)\cdot x
\end{align*}
and compute
\begin{align*}
\sup_{B_\rho}|u(x)-\ell(x)|\leq&\sup_{B_\rho}|h(x)-\ell(x)|+\sup_{B_\rho}|u(x)-h(x)|\\
\leq &C\rho^{1+\alpha_0}+\varepsilon_0,
\end{align*}
where $C>0$ and $\alpha_0\in(0,1)$ are universal constants. We now make the universal choices
\begin{align*}
\rho=\left(\frac{1}{2C}\right)^\frac{1}{\alpha_0-\alpha}, \quad \varepsilon_0=\frac{1}{2}\rho^{1+\alpha}
\end{align*}
for $\alpha\in(0,\alpha_0)$ to be chosen later. This concludes the proof.
\end{proof}

The next result extends the oscillation control
from Lemma \ref{Lemma_first_it1} to discrete scales $\rho^n$, for every $n\in\Nn$. Furthermore, it
provides a control on the coefficients of the approximating polynomials.

\begin{Lemma}\label{Lemma_geometric_iterations1}
Let $u\in C(B_1)$ be a solution to \eqref{eq_var_coef}. Then there exists a sequence of affine functions $(\ell_n)_n$ of the form
\begin{align*}
\ell_n(x)=a_n+b_n\cdot x
\end{align*}
satisfying 
\begin{align}\label{eq_it_1}
	\|u-\ell_n\|_{L^\infty(B_{\rho^n})}\leq \rho^{n(1+\alpha)}
\end{align}
and
\begin{align}\label{eq_it_2}
|a_n-a_{n-1}|+\rho^n|b_n-b_{n-1}|\leq C\rho^{(n-1)(1+\alpha)}
\end{align}
for every $n\in \Nn$, where $C$ depends only on $d, \lambda$ and $\Lambda$, $\alpha=\min\{\alpha_0^-,1-a\}$ and $a$ is given by \ref{Assumption1}.

\end{Lemma}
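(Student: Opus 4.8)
The statement is the standard geometric iteration that upgrades Lemma~\ref{Lemma_first_it1} (the single-scale oscillation decay) to all dyadic scales $\rho^n$, together with a Cauchy-type estimate on the increments of the approximating affine functions. The plan is to argue by induction on $n$. The base case $n=1$ is exactly Lemma~\ref{Lemma_first_it1}, with $\ell_0$ taken to be the zero function (so that \eqref{eq_it_2} for $n=1$ is the trivial estimate $|a_1|+\rho|b_1|\le C$, which follows from the $L^\infty$ bound on $u$ and \eqref{eq_it_1} at scale $\rho$ by comparing values at $0$ and at points of $\partial B_\rho$). For the inductive step, assuming $\ell_0,\dots,\ell_n$ have been constructed, I would rescale: set
\begin{align*}
u_n(y):=\frac{(u-\ell_n)(\rho^n y)}{\rho^{n(1+\alpha)}},\qquad y\in B_1.
\end{align*}
By \eqref{eq_it_1}, $\|u_n\|_{L^\infty(B_1)}\le 1$. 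The key point is to compute the equation satisfied by $u_n$ and check that, for $\rho$ small (fixed universally as in Lemma~\ref{Lemma_first_it1}), $u_n$ falls into the smallness regime of Lemma~\ref{Lemma_approximation}, so that Lemma~\ref{Lemma_first_it1} applies to $u_n$ and produces an affine $\tilde\ell$ with $\|u_n-\tilde\ell\|_{L^\infty(B_\rho)}\le \rho^{1+\alpha}$. Undoing the scaling, $\ell_{n+1}(x):=\ell_n(x)+\rho^{n(1+\alpha)}\tilde\ell(\rho^{-n}x)$ satisfies \eqref{eq_it_1} at level $n+1$, and \eqref{eq_it_2} at level $n+1$ follows from $|\tilde a|+\rho|\tilde b|\le C$ (again comparing values of $u_n$ and $\tilde\ell$ on $B_\rho$, using $\|u_n\|_{L^\infty}\le 1$).

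\textbf{Main obstacle: controlling the rescaled equation.} This is where the exponent $\alpha=\min\{\alpha_0^-,1-a\}$ and Assumption~\ref{Assumption4} enter, and it is the only nonroutine part. The function $u-\ell_n$ solves $F(D^2u,x)=f$ since $\ell_n$ is affine; hence $u_n$ solves an equation of the form
\begin{align*}
\rho^{-n(1+\alpha)+2n}\,F\!\left(\rho^{n(1+\alpha)-2n}D^2u_n,\,\rho^n y\right)=f(\rho^n y),
\end{align*}
which after dividing by the appropriate power must be recast as $\tilde F_n(D^2u_n,y)=\tilde f_n(y)$ with $\tilde F_n$ of the type handled by Lemma~\ref{Lemma_approximation}, i.e. a rescaling $F_\mu$ up to a controlled perturbation of the coefficients. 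I would track three smallness requirements: (i) the source term, $\|\tilde f_n\|_{L^\infty}$, must be $\le\delta$ — this uses $\|f\|_{L^\infty}\le\delta$ together with the fact that $\rho^{n(1-\alpha-a)}$ stays bounded precisely because $\alpha\le 1-a$ (so the power of $\rho$ is nonnegative), which is exactly why the threshold $1-a$ appears; (ii) the zoom parameter $\mu=\rho^n\to 0$, so the coefficient-freezing of Remark~\ref{Rmk:flatenning} / Lemma~\ref{Lemma_approximation} kicks in; (iii) the oscillation $\beta_{F,0}$ contributes a term that, after rescaling, is governed by $\rho^{-n}\big(\int_{B_{\rho^n}}\beta^d\big)^{1/d}=o(\rho^{na})$ by Assumption~\ref{Assumption4}, which is absorbed into the $\varepsilon$–$\delta$ of Lemma~\ref{Lemma_approximation} for $n$ large; for the finitely many small $n$ one argues directly or absorbs into constants. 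The requirement $\alpha<\alpha_0$ is what guarantees $\rho=(2C)^{-1/(\alpha_0-\alpha)}$ is well-defined and the decay in Lemma~\ref{Lemma_first_it1} closes.

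\textbf{Conclusion of the step.} Once the rescaled equation is checked to lie in the hypotheses of Lemma~\ref{Lemma_approximation} uniformly in $n$, the induction closes and both \eqref{eq_it_1} and \eqref{eq_it_2} hold for all $n\in\Nn$ with a universal constant $C=C(d,\lambda,\Lambda)$. I expect the bookkeeping of the powers of $\rho$ in the rescaled operator — making sure the exponent on $F$'s matrix argument, the exponent pulled out in front, and the weight $|x_d|^a$ all balance so that $\tilde F_n$ is genuinely of the form $\tilde F_{\rho^n}$ plus an $o(1)$ perturbation — to be the delicate point, but it is dictated entirely by the scaling $F_\mu(M,x)=\mu^{-a}F(M,\mu x)$ and the homogeneity $|x_d|^a\mapsto\mu^a|x_d|^a$, so no new idea beyond careful tracking is needed.
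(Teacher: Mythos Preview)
Your proposal is correct and follows essentially the same route as the paper: induction on $n$ with $\ell_0\equiv 0$, the rescaling $v_k(x)=\rho^{-k(1+\alpha)}(u-\ell_k)(\rho^k x)$, the observation that the rescaled operator $F^k(M,x):=\rho^{(1-\alpha)k}F(\rho^{(\alpha-1)k}M,x)$ still satisfies \ref{Assumption1}--\ref{Assumption4} and Remark~\ref{Rmk:flatenning}, the source-term control via $\alpha\le 1-a$ (so that $\rho^{(1-\alpha-a)k}\le 1$), and the application of Lemma~\ref{Lemma_approximation}/Lemma~\ref{Lemma_first_it1} to $v_k$ to produce $\ell_{k+1}$. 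The only cosmetic difference is that the paper absorbs your point~(iii) about the oscillation into the statement ``Remark~\ref{Rmk:flatenning} still holds for $F^k$'' and handles the initial smallness by the pre-iteration normalization $r_1\le\delta$, so no separate treatment of small $n$ is needed.
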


\begin{proof}
We argue by induction on $n\in \Nn$. For simplicity of presentation, we split the proof into three steps.

\textbf{Step 1 -- } Let $\ell_0\equiv 0$ and set
\begin{align*}
\ell_1(x):=h(0)+Dh(0)\cdot x
\end{align*}
where $h$ is the function approximating $u$ from the proof of Lemma \ref{Lemma_approximation}. Then, owing to this lemma, \eqref{eq_it_1} and \eqref{eq_it_2} are readily verified in the case $n=1$.

\textbf{Step 2 -- } Suppose now the claim has been verified for $n=k$ and consider the case $n=k+1$. Let $v_k$ be defined as
\begin{align*}
v_k(x)=\frac{(u-l_k)}{\rho^{k(1+\alpha)}}\left(\rho^kx\right).
\end{align*}
Clearly, $v_k$ solves, for $x\in B_1$, the equation
\begin{align*}
\rho^{(1-\alpha-a)k}F\left(\rho^{(\alpha-1)k}D^2v_k,\rho^kx\right)=\rho^{(1-\alpha-a)k}f\left(\rho^kx\right).
\end{align*}
Call $F^k(M,x):=\rho^{(1-\alpha)k}F\left(\rho^{(\alpha-1)k}M,x\right)$ and note that $F^k$ still satisfies assumptions \ref{Assumption1}-\ref{Assumption2} with the same constants and one can also check that Remark \ref{Rmk:flatenning} still holds for $F^k$ with $\overline{F}(M)$ replaced with $\rho^{(1-\alpha)k} \overline{F}(\rho^{(\alpha-1)k}M)$, which still has the same ellipticity contants $\lambda, \Lambda$. Hence we can rewrite the previous equation as
\begin{align*}
F^k_{\rho^k}\left(D^2v_k,x\right)=f_k,
\end{align*}
where $\|f_k\|_{L^\infty(B_1)}\leq \delta_0$ by our choice $\alpha\leq 1-a$.

Applying Lemma \ref{Lemma_approximation}, we find $\overline{h}\in C_{loc}^{1,\alpha_0}(B_{9/10})$ such that
\begin{align*}
\|v_k-\overline{h}\|_{L^\infty(B_{1/2})}\leq \varepsilon_0,
\end{align*}
with
\begin{align*}
\rho^{(1-\alpha)k} \overline{F}\left(\rho^{(\alpha-1)k}D^2\overline{h}\right)=0.
\end{align*}
Furthermore, from Lemma \ref{Lemma_first_it1} we get
\begin{align*}
\sup_{B_\rho}|v_k(x)-\overline{h}(0)-D\overline{h}(x)\cdot x|\leq \rho^{1+\alpha}
\end{align*}
for $\alpha=\min\{\alpha_0^-,1-a\}$. Finally, using the definition of $v_k$ gives
\begin{align*}
\sup_{B_{\rho^{k+1}}}|u(x)-\ell_{k+1}(x)|\leq \rho^{(1+\alpha)(k+1)},
\end{align*}
where
\begin{align*}
\ell_{k+1}(x)=\ell_k(x)+\rho^{(1+\alpha)k}\left(\overline{h}(0)+D\overline{h}(0)\cdot \rho^{-k}x\right).
\end{align*}

\textbf{Step 3 -- } To complete the induction step, we must verify that the coefficients in $\ell_{k+1}$ satisfy \eqref{eq_it_2}. Notice that
\begin{align*}
\begin{cases}
a_{k+1}-a_k=\rho^{(1+\alpha)k}\overline{h}(0)\\
b_{k+1}-b_k=\rho^{\alpha k}D\overline{h}(0).
\end{cases}
\end{align*}
Since $\rho^{(1-\alpha)k} \overline{F}\left(\rho^{(\alpha-1)k}D^2\overline{h}\right)=0$, where $\rho^{(1-\alpha)k} \overline{F}\left(\rho^{(\alpha-1)k} \cdot \right)$ has the same ellipticity constants as $\overline{F}$, $\overline{h}$ enjoys the same $C^{1,\alpha_0}$ estimates as $h$, as thus \eqref{eq_it_2} is verified.

\end{proof}

\bigskip
\subsection{Interior regularity}

We now treat interior points, that is, the points where $|(x_0)_d|>0$. We start with the very simple observation that solutions to
\begin{align*}
F(D^2u,x_0)=0,\quad \mbox{ in } B_1
\end{align*}
belong to $C^{1,\alpha_0}(B_{1/2})$, where $\alpha_0=\alpha_0(\lambda, \Lambda, d)$ is the same as in Lemma \ref{Lemma_approximation}, since the operator $G(M):=|(x_0)_d|^{-a}F(M,x_0)$ is uniformly elliptic with constants $\lambda$ and $\Lambda$. 

We then define the rescaling 
\begin{align*}
\tilde{u}(y)=u(r_1y+x_0)-u(x_0)
\end{align*}
with
\begin{align*}
r_1=|(x_0)_d|/2.
\end{align*}
Then $\tilde{u}$ solves
\begin{align*}
r_1^{2-a}F(r_1^{-2}D^2\tilde{u},r_1y+x_0)=r_1^{2-a}f(r_1y+x_0)
\end{align*}
which we rewrite as
\begin{align*}
\tilde{F}(D^2\tilde{u},x)=\tilde{f}(x),\quad \mbox{ in } B_1
\end{align*}
where $\tilde{F}$ is uniformly elliptic since $r_1^{-a} |r_1 y_d + (x_0)_d|^a\in(1,3^a)$.

Combining this observation with \ref{Assumption4}, we can immediately apply \cite[Theorem 8.3]{Caffarelli-Cabre_1995} and conclude that $\tilde{u}\in C^{1,\alpha}(B_{1/2})$ for every $\alpha<\alpha_0$ and satisfies
\begin{align*}
[\tilde{u}]_{C^{1,\alpha}(B_{1/2})}\leq C\left(\|\tilde{u}\|_{L^\infty(B_1)}+\|\tilde{f}\|_{L^\infty(B_1)}\right)
\end{align*}
where $C=C(\alpha,a,\lambda,\Lambda,d)$. Rescalling back we get
\begin{align*}
[u]_{C^{1,\alpha}(B_{r_1/2}(x_0))}\leq C r_1^{-(1+\alpha)}\left(\osc_{B_{r_1(x_0)}} u +r_1^{2-a}\|f\|_{L^\infty(B_1)}\right).
\end{align*}

\bigskip
\subsection{Regularity across $B_1'$}

The main tool used in this section is the following well-known result, which can be found for example in \cite[Proposition 2.3]{Milakis-Silvestre_2006}. It says that if we have interior regularity in $B_{1/2}^+$ and pointwise regularity on $B_{1/2}'$, then regularity up to the boundary holds in $\overline{B_{1/2}^+}$.

\begin{Proposition}
Let $u$ be a continuous function in $\overline{B_1^+}$ that satisfies the following interior $C^{1,\alpha}$ estimate: If $B_r(x_0)\subset B_1^+$ then 
\begin{align*}
[u]_{C^{1,\alpha}(B_{r_1/2}(x_0))}\leq C_1 r_1^{-(1+\alpha)}\left(\osc_{B_{r_1(x_0)}} u +r_1^{2-a}\|f\|_{L^\infty(B_1)}\right).
\end{align*}
Suppose also that $u$ is $C^{1,\alpha}$ at the boundary from one side, that is, if $x_0\in B_1'$ then there exists an affine function $\ell(x)=a+b\cdot (x-x_0)$ such that
\begin{align}\label{eq_uptobd2}
\sup_{x\in B_r^+(x_0)}|u(x)-\ell(x)|\leq C_2 r^{1+\alpha}.
\end{align}
Then $u\in C^{1,\alpha}(\overline{B_{1/2}^+})$ with constant depending only $C_1$ and $C_2$; additionally, we have $\ell(x)=u(x_0)+Du^+(x_0)\cdot(x-x_0)$, where $Du^+(x_0)$ denotes the gradient of $u$ at $x_0$ coming from $B_1^+$.
\end{Proposition}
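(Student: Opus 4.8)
The plan is to reproduce the standard gluing argument behind \cite[Proposition 2.3]{Milakis-Silvestre_2006}. Write $d(x)=x_d=\dist(x,B_1')$ for $x\in\overline{B_1^+}$, and for $x\in B_1^+$ let $\bar x=(x',0)$ be its orthogonal projection onto $B_1'$; denote by $\ell_{x_0}(x)=a_{x_0}+b_{x_0}\cdot(x-x_0)$ the tangent affine function furnished by \eqref{eq_uptobd2} at a point $x_0\in B_1'$. The first observation is that the interior $C^{1,\alpha}$ estimate is unchanged if $u$ is replaced by $u-\ell$ for an affine $\ell$, since subtracting $\ell$ alters neither $D^2u$ (the equation being of Hessian type) nor the $C^{1,\alpha}$ seminorm. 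Applying it to $w:=u-\ell_{\bar x}$ on a ball of radius comparable to $d(x)$ contained in $B_1^+$, and using $B_{d(x)}(x)\subset\overline{B_{2d(x)}^+(\bar x)}$ together with \eqref{eq_uptobd2} at $\bar x$ to get $\osc_{B_{d(x)}(x)}w\le CC_2\,d(x)^{1+\alpha}$, we obtain
\begin{align*}
[u]_{C^{1,\alpha}(B_{d(x)/2}(x))}=[w]_{C^{1,\alpha}(B_{d(x)/2}(x))}\le C_1\,d(x)^{-(1+\alpha)}\big(CC_2\,d(x)^{1+\alpha}+d(x)^{2-a}\|f\|_{L^\infty}\big)\le C,
\end{align*}
where the last inequality uses $\alpha\le 1-a$ so that $d(x)^{1-a-\alpha}$ stays bounded. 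This is the key point: a seminorm bound on a family of balls shrinking toward $B_1'$, but with a \emph{universal} constant.

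From this, a first-order Taylor expansion of $w$ at $x$ tested along the directions $\pm e_i$ at scale $d(x)/4$, combined once more with \eqref{eq_uptobd2}, gives the pointwise gradient estimate
\begin{align*}
|Du(x)-b_{\bar x}|\le C\,d(x)^{\alpha}\qquad\text{for every }x\in B_{1/2}^+.
\end{align*}
Separately, comparing $\ell_{x_0}$ and $\ell_{y_0}$ for two boundary points $x_0,y_0\in B_{1/2}'$ on the half-ball $B_{|x_0-y_0|}^+(y_0)\subset B_{2|x_0-y_0|}^+(x_0)$, where both approximate $u$ with error $O(|x_0-y_0|^{1+\alpha})$, and using that an affine function bounded by $M$ on a half-ball of radius $r$ has gradient $O(M/r)$, yields $|b_{x_0}-b_{y_0}|\le C|x_0-y_0|^{\alpha}$; letting $r\to0$ in \eqref{eq_uptobd2} also gives $a_{x_0}=u(x_0)$.

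Now define $v(x)=Du(x)$ for $x\in B_{1/2}^+$ and $v(x_0)=b_{x_0}$ for $x_0\in B_{1/2}'$, and check $[v]_{C^{\alpha}(\overline{B_{1/2}^+})}\le C$: given $x,y\in\overline{B_{1/2}^+}$ with $d(x)\ge d(y)$, set $\delta=|x-y|$. If $\delta<d(x)/4$ then $y\in B_{d(x)/2}(x)\subset B_1^+$ and the seminorm bound above gives $|v(x)-v(y)|\le C\delta^{\alpha}$. If $\delta\ge d(x)/4$ then $d(x),d(y)\le 4\delta$ and $|\bar x-\bar y|\le\delta$, so the triangle inequality with the three pointwise bounds just derived gives $|v(x)-v(y)|\le|v(x)-b_{\bar x}|+|b_{\bar x}-b_{\bar y}|+|b_{\bar y}-v(y)|\le C\delta^{\alpha}$. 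Hence $v\in C^{\alpha}(\overline{B_{1/2}^+})$. Since $u$ is continuous, $\overline{B_{1/2}^+}$ is convex, and $Du=v$ in the interior, integrating $v$ along segments shows that $v$ is genuinely the gradient of $u$ up to the boundary and that $u\in C^{1,\alpha}(\overline{B_{1/2}^+})$ with norm controlled by $C_1,C_2$ (and $\|f\|_{L^\infty}$); in particular $Du^+(x_0)=v(x_0)=b_{x_0}$, so $\ell(x)=u(x_0)+Du^+(x_0)\cdot(x-x_0)$.

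The main obstacle is the first step: one cannot apply the interior estimate to $u$ directly near $B_1'$ because $\osc u$ on a ball of radius $r$ only decays linearly in $r$ (the tangent plane has nonzero slope), so one must subtract $\ell_{\bar x}$ first, and the degenerate weight factor $r^{2-a}$ in the interior estimate is precisely what is absorbed by the hypothesis $\alpha\le 1-a$; after that, the argument is routine Campanato-type bookkeeping. Alternatively, one may invoke \cite[Proposition 2.3]{Milakis-Silvestre_2006} verbatim once it is observed that the interior estimate quoted here is exactly of the form required there.
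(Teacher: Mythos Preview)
Your proposal is correct and matches the paper's approach: the paper does not give a proof at all but simply cites \cite[Proposition 2.3]{Milakis-Silvestre_2006}, and what you have written is precisely a faithful reproduction of that gluing argument, including the key step of subtracting the boundary affine approximation before invoking the interior estimate. Your observation that the term $r_1^{2-a}\|f\|_{L^\infty}$ is absorbed only because $\alpha\le 1-a$ is the one adaptation needed relative to the uniformly elliptic version in \cite{Milakis-Silvestre_2006}, and it is exactly the constraint the paper imposes on $\alpha$ in Theorem~\ref{Theorem_C1alpha}.
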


Applying this result also in the lower half-ball $B_1^-$ we see that $u$ is $C^{1,\alpha}$ up to the boundary $B_1'$ from either side. Since additionally \eqref{eq_uptobd2} holds in $B_1^-$ for the same affine function $\ell$, we get $Du^+(x_0)=b=Du^-(x_0)$, which concludes that $u$ is $C^{1,\alpha}$ across $B_1'$.

\begin{Remark}
We point out that the regularity result above is optimal, as illustrated by the following simple example. Consider $F(M,x)=|x|^a \,\mbox{Tr}\, M$ for $0<a<1$ so that $\alpha_0=1$. Then $u(x)=|x|^{1+\alpha}$ is a solution to
\begin{align*}
|x|^a\Delta u=f, \mbox{ in } B_1,
\end{align*}
for $f=C|x|^{\alpha+a-1}$ which is bounded and H\"older continuous provided $\alpha\geq 1-a$. Therefore, we can not expect solutions to be more regular than $C^{1,\alpha}$ for $\alpha=1-a$. It is important to note that these counter-examples exist only in the inhomogeneous setting, as in the homogeneous one we can divide the equation by the degeneracy, see Lemma \ref{Lemma_division}.

\end{Remark}

\section*{Acknowledgements}
This work was initiated while the first author was visiting the Department of Mathematics at Johns Hopkins University. The first author acknowledges the department for their hospitality. The second author is partially supported by NSF DMS grant $2154219$, " Regularity {\sl vs} singularity formation in elliptic and parabolic equations". We would like to thank Jonah Duncan and Hongjie Dong  for very thoughtful comments about the present work.

\bibliographystyle{abbrv}
\bibliography{biblio,biblio2}

\end{document}